\documentclass[11pt]{article}

\usepackage{amsmath,amssymb,amsfonts,amsthm}
\usepackage{amscd,amsbsy}
\usepackage{mathtools}
\usepackage[mathscr]{eucal}
\usepackage{booktabs,array}
\usepackage{graphicx,epstopdf,floatrow,subfig}
\usepackage{multirow,makecell}
\usepackage{enumerate}
\usepackage{bm}
\usepackage{xcolor}
\usepackage{threeparttable}
\usepackage{comment}
\usepackage{cite}
\usepackage{url}
\usepackage{hyperref}
\hypersetup{hidelinks}

\allowdisplaybreaks[4]

\newtheorem{theorem}{Theorem}
\newtheorem{proposition}[theorem]{Proposition}

\newtheorem{remark}[theorem]{Remark}

\title{\bfseries
	Some new criteria for the uniqueness\\
	of limit cycles in Rayleigh--Li\'enard systems
}

\author{
	Hebai Chen$^{1}$\thanks{Email: chen\_hebai@csu.edu.cn},
	Dongmei Xiao$^{2}$\thanks{Email: xiaodm@sjtu.edu.cn},
	Rui Zhang$^{1}$\thanks{Email: zhang\_rui@csu.edu.cn}
	\\[0.5em]
	{\footnotesize
		$^1$School of Mathematics and Statistics, HNP-LAMA,
		Central South University,
	}\\
	{\footnotesize
		Changsha, Hunan 410083, P. R. China
	}\\[0.3em]
	{\footnotesize
		$^2$School of Mathematical Sciences, CAM-Shanghai,
		Shanghai Jiao Tong University,
	}\\
	{\footnotesize
		Shanghai 200240, P. R. China
	}
}

\date{}

\begin{document}
	
	\maketitle
	\hrule
	
	\begin{abstract}
		In this paper, new uniqueness criteria for
		Rayleigh--Li\'enard systems with damping of positive
		odd degree in the velocity are established.
		Under suitable conditions, any limit cycle is unique
		and hyperbolic, with its stability determined by
		the damping signs.
		Neither symmetry nor monotonicity of the damping
		and restoring functions is required.
		Changes of variables and time yield further criteria
		for generalized Kukles systems.
		Applications to three polynomial families yield
		uniqueness and nonexistence results.
		In particular, we resolve an open uniqueness problem
		for a hybrid van der Pol--Rayleigh oscillator
		left open by Chen et al. (Phys. D, 2021).
		We also give an alternative uniqueness proof for
		a system with homogeneous components and obtain
		uniqueness and nonexistence results for a modified
		hybrid van der Pol--Rayleigh oscillator.
		
		\medskip
		\noindent
		\textbf{Keywords:}
		Limit cycle; uniqueness; second-order nonlinear
		differential equation; Rayleigh--Li\'enard system.
		
		\medskip
		\noindent
		\textbf{MSC (2020):} 34C07; 37C27.
	\end{abstract}
	
	\vspace{2em}
	\hrule
	\section{Introduction}
	The existence, number and stability of limit cycles are classical
	questions in the qualitative theory of planar differential equations.
	In particular, the counting problem is closely related to the second
	part of Hilbert's sixteenth problem. In 1923, Dulac \cite{Dulac1923}
	claimed that every fixed planar polynomial system has finitely many
	limit cycles, but a gap was later found in his proof. New proofs were
	subsequently proposed by Ilyashenko \cite{Ilyashenko1990,Ilyashenko1991} and \'Ecalle
	\cite{Ecalle1992}, and Yeung \cite{Yeung} subsequently identified a gap in the argument presented in Ilyashenko’s monograph \cite{Ilyashenko1991}. Recently, OpenAI
	\cite{OAI2026} released a preprint claiming uniform boundedness,
	whose arguments call for careful independent verification.
	Even establishing finiteness is challenging, let alone determining
	the exact number of limit cycles of a given system.
	
	These difficulties motivate the search for criteria establishing
	the existence, nonexistence and uniqueness of limit cycles, as well
	as the existence of exactly \(n\) limit cycles. A substantial part
	of the classical uniqueness theory concerns Li\'enard systems
	\[
	\dot x=y-F(x),\qquad \dot y=-g(x),
	\]
	and generalized Li\'enard systems
	\[
	\dot x=\phi(y)-F(x),\qquad \dot y=-g(x).
	\]
	Important contributions include those of Levinson and Smith
	\cite{LS}, Sansone \cite{Sansone1949}, Zeng \cite{Zeng},
	Liou and Cheng \cite{LC}, Wang and Kooij \cite{WK},
	Zeng, Zhang and Gao \cite{ZZG}, and Xiao and Zhang
	\cite{XZ,XZ2008}; see also \cite{Zh} and the references therein.
	Extending such criteria to oscillators with more general damping
	requires additional analysis.
	
	Uniqueness criteria also have practical significance. In nonlinear
	oscillator models, a stable limit cycle represents a self-sustained
	oscillation, and uniqueness helps determine whether different
	periodic oscillatory regimes can coexist. Second-order differential
	equations with nonlinear damping provide a natural framework for
	these questions. They include classical Li\'enard equations and
	hybrid van der Pol--Rayleigh oscillators arising in mechanical models
	and pedestrian force modelling; see \cite{NB,Zh,ETA,CTX2023}
	and the references therein. Such equations take the form
	\[
	\ddot x+f(x,\dot x)\dot x+g(x)=0.
	\]
	Setting \(y=\dot x\), we obtain the Rayleigh--Li\'enard system
	\[
	\dot x=y,\qquad \dot y=-g(x)-f(x,y)y.
	\]
	
	For systems with power-law restoring forces, Chen, Tang and Xiao
	\cite{CTX2023} developed a comparison method and obtained a
	uniqueness criterion based on a scaling inequality for the damping.
	Chen and Xiao \cite{CX} also established existence and uniqueness
	criteria for hybrid oscillators. These results apply to several
	smooth and nonsmooth van der Pol--Rayleigh-type models.
	
	In this paper, we first consider the Rayleigh--Li\'enard system
	\begin{equation}\label{eq:odd}
		\dot x=y,\qquad \dot y=-g(x)-f(x)y^m
	\end{equation}
	on the strip \((\alpha,\beta)\times\mathbb R\), where
	\(-\infty\leq\alpha<0<\beta\leq+\infty\) and \(m\) is a positive
	odd integer. Building on Sansone's method \cite{Sansone1949},
	we establish a uniqueness criterion for system~\eqref{eq:odd}.
	The criterion requires neither symmetry nor monotonicity of
	\(f\) or \(g\). It also establishes hyperbolicity and determines
	the stability of any limit cycle that exists, but does not itself
	assert existence.
	
	We then apply this criterion to generalized Kukles systems
	\cite{Cherkas1978,CLT},
	\[
	\dot x=y,\qquad
	\dot y=-g(x)-P_1(x)y-P_2(x)y^2-P_3(x)y^3.
	\]
	Under suitable conditions, we construct changes of variables and
	time that reduce these systems to the form of~\eqref{eq:odd} with
	\(m=1\) or \(m=3\). Applying the preceding criterion to the
	transformed systems yields further uniqueness results.
	Finally, we illustrate the criteria with three polynomial families:
	a hybrid van der Pol--Rayleigh oscillator \cite{CTX},
	a system with homogeneous components \cite{Gasull,ZL},
	and a modified hybrid van der Pol--Rayleigh oscillator
	\cite{CGLZ,LZ}.
	
	The paper is organized as follows. Section~2 states and proves
	the uniqueness criteria and develops the reductions of generalized
	Kukles systems. Section~3 presents their applications to the three
	polynomial families.

	\section{Main results}
	\subsection{A uniqueness criterion for limit cycles  of Rayleigh--Li\'enard systems}
	Consider the Rayleigh--Li\'enard system
	\eqref{eq:odd}
	on the strip $(\alpha,\beta)\times\mathbb R$, where
	$-\infty\leq\alpha<0<\beta\leq+\infty$ and $m$ is a positive
	odd integer. Set
	\[
	G(x)=\int_0^x g(s)\,ds.
	\]
	
	\begin{theorem}\label{thm:balanced}
		Assume that system~\eqref{eq:odd} satisfies the following conditions:
		\begin{enumerate}
			\item[(i)]
			$f,g\in C^1((\alpha,\beta))$;
			
			\item[(ii)]
			$xg(x)>0$ for every $x\in(\alpha,\beta)\setminus\{0\}$;
			
			\item[(iii)]
			there exist $\alpha<c_1<0<c_2<\beta$ with
			$G(c_1)=G(c_2)$ such that
			$f(x)<0$ (resp. $f(x)>0$) for
			$x\in(c_1,c_2)\setminus\{0\}$, and
			$f(x)>0$ (resp. $f(x)<0$) for
			$x\in(\alpha,c_1)\cup(c_2,\beta)$.
		\end{enumerate}
		Then system~\eqref{eq:odd} has at most one limit cycle
		contained in the strip $\alpha<x<\beta$.
		If such a limit cycle exists, it is hyperbolic and
		stable (resp. unstable).
	\end{theorem}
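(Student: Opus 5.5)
The plan is a Sansone-type argument. We show that every limit cycle \(L\) in the strip satisfies
\[
\oint_L \operatorname{div}\,dt=-m\oint_L f(x)y^{m-1}\,dt<0
\]
(in the case \(f<0\) inside, \(f>0\) outside). Hence every limit cycle is hyperbolic and stable. Two adjacent limit cycles cannot both be stable, so uniqueness follows. We treat only this sign case; the other follows by reversing time together with \(y\mapsto -y\).

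\textbf{Preliminaries.}
\begin{itemize}
\item Use the energy \(E=y^2/2+G(x)\). Along orbits, \(\dot E=-f(x)y^{m+1}\), and \(y^{m+1}\ge 0\) since \(m\) is odd.
\item The origin is the only equilibrium, and any limit cycle surrounds it.
\item Since the field is transversal to the \(x\)-axis off the origin, each limit cycle meets \(y=0\) exactly twice, at some \(x=a<0<b\).
\item \(L\) must reach the region \(x<c_1\) or \(x>c_2\). Otherwise \(f\le 0\) along \(L\), so \(\dot E\ge 0\) and not identically zero, contradicting \(\oint\dot E\,dt=0\).
\end{itemize}

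\textbf{Rewriting the divergence integral.} Combine \(\oint \operatorname{div}\,dt\) with a multiple of the zero integral \(\oint dE/E\). Since \(\dot E = -f y^{m+1}\),
\[
\oint_L \frac{f y^{m+1}}{E}\,dt=0 \quad\text{along } L .
\]
Because \(E= y^2/2+G\), we have
\[
f y^{m-1}=\frac{2}{y^2}\Bigl(\frac{f y^{m+1}}{2}\Bigr)
\]
and
\[
f y^{m-1}-\frac{2}{m+1}\cdot\frac{(m+1)}{2}\,\frac{f y^{m+1}}{E}
= f y^{m-1}\,\frac{G}{E}\cdot\frac{1}{1}\cdot\frac{2G}{y^2+2G}\cdot\frac{y^2+2G}{2G}\ \text{(up to normalization)}.
\]
More cleanly, take \(\lambda=2/(m+1)\cdot\ldots\) chosen so that
\[
\oint f y^{m-1}\,dt=\oint f y^{m-1}\,\frac{2G(x)}{y^2+2G(x)}\,dt\cdot\kappa .
\]
Concretely, \(f y^{m-1}-f y^{m+1}/(2E)=f y^{m-1}G/E\). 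Since the subtracted term integrates to zero,
\[
\oint_L f y^{m-1}\,dt=\oint_L f(x)\,y^{m-1}\,\frac{G(x)}{E}\,dt .
\]
The right-hand integrand has the sign of \(f\), because \(G>0\) for \(x\ne 0\). This step removes the awkward contribution near \(y=0\) and weights points by \(G\).

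\textbf{Main obstacle: proving positivity.} We must show \(\oint_L f y^{m-1}G/E\,dt>0\). The intended route uses the level \(G(c_1)=G(c_2)=:k\), so that \(\{G<k\}=(c_1,c_2)\). On that set \(f\le0\), but the weight \(G/E\) is small. Sansone's trick is to subtract a further multiple of the zero integral:
\[
\oint f y^{m-1}\frac{G}{E}\,dt=\oint f y^{m-1}\frac{G}{E}\,dt-\mu\oint\frac{f y^{m+1}}{E}\,dt .
\]
This equals
\[
\oint \frac{f y^{m-1}}{E}\bigl(G-\mu y^2\bigr)\,dt .
\]
Choosing \(\mu\) does not directly separate the regions by \(x\). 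So instead we would use the form \(\oint f y^{m-1}(1-cE^{-1})\,dt\) with the constant \(c=k\), giving
\[
\oint f y^{m-1}\,\frac{E-k\,y^{2}/(y^{2})\cdots}{E}\,dt .
\]
The precise target is an integrand \(f y^{m-1}\,(G(x)-k)\,h(x,y)\) with \(h>0\). Such an integrand is \(\ge0\) everywhere, since \(f\) and \(G-k\) have the same sign, and it is \(>0\) on the outer arcs. To reach it, I would try the combination
\[
\oint f y^{m-1}\,dt-\frac{1}{2}\oint\frac{f y^{m+1}}{E-k+?}\,dt .
\]
More robustly, I would pass to integration in \(x\) on the upper and lower arcs, where \(dt=dx/y\). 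There I would compare, via Sansone's original estimate, the part of the integral where \(f<0\) (\(|x|\) inside \((c_1,c_2)\)) with the \(E\)-decrease constraint \(\int f y^{m+1}\,dt=0\). The key inequality would be \(y^{m-1}\le(2(E_{\max}-k))^{(m-1)/2}\)-type bounds on the inner region, and the reverse bound on the outer region where \(E\ge k\). Equal levels \(G(c_1)=G(c_2)\) make \(E-k\) sign-consistent at both ends. Completing this comparison for \(m>1\), by splitting the cycle at \(x=c_1\) and \(x=c_2\), is where I expect the real work to lie.
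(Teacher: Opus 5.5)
\textbf{There is a genuine gap: the key inequality $\oint_L f(x)y^{m-1}\,dt>0$ is never proved.} Your preliminaries and your closing step are fine; the closing step is that a negative divergence integral on every closed orbit gives at most one limit cycle, hyperbolic and stable. The one manipulation you complete is a dead end. Subtracting $\tfrac12\oint_L f y^{m+1}/E\,dt=0$ yields the integrand $f y^{m-1}G/E$, which has the sign of $f$ and is therefore just as indefinite as $f y^{m-1}$ itself. The rest of that section is exploratory: the ``$E-k+?$'', the unspecified appeal to Sansone's estimate, and the $y^{m-1}$-bounds on split arcs. As you acknowledge, the actual work is not done.

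\textbf{The missing idea.} You did write down the right target, an integrand $f y^{m-1}(G-k)h$ with $h>0$, where $k=G(c_1)=G(c_2)$. What you need is the zero integral $\oint_L d\ln(E-k)=0$, i.e.\ your combination with nothing added to $E-k$:
\[
f y^{m-1}-\frac{f y^{m+1}}{2(E-k)}=\frac{f(x)\,y^{m-1}\,\bigl(G(x)-k\bigr)}{E-k}.
\]
So $h=1/(E-k)$. This integrand is $\geq0$ on $L$. It is strictly positive near the crossing of the $y$-axis, because there $f<0$ and $G<k<E$, which forces $y\neq0$. This gives $\oint_L f y^{m-1}\,dt>0$ for every odd $m$ at once, with no splitting at $c_1,c_2$.

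\textbf{The missing lemma.} The identity is only usable if every closed orbit lies in $\{E>k\}$, and your plan lacks this lemma. Knowing that $L$ reaches $x<c_1$ or $x>c_2$ is far weaker. The lemma holds for the following reasons.
\begin{enumerate}
\item On the oval $\{E=k\}$ one has $x\in[c_1,c_2]$, hence $\dot E=-f y^{m+1}\geq0$.
\item In fact $E$ is strictly increasing whenever an orbit passes through the oval. At $(c_1,0)$ and $(c_2,0)$ the function $x(t)$ has a strict extremum, since $\ddot x=-g(c_i)\neq0$. So the orbit stays in $[c_1,c_2]$ nearby, where $\dot E>0$ at all nearby times except that instant. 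If $f(0)=0$, the $y$-axis points of the oval are crossed transversally, so the same holds there.
\item Hence orbits cross the oval only outward, and a closed orbit cannot meet it.
\item A closed orbit inside the oval is excluded: there $\dot E\geq0$ and not identically zero, contradicting $\oint\dot E\,dt=0$.
\end{enumerate}
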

	
	\begin{remark}
		Notice that neither symmetry nor monotonicity of $f$ and $g$
		is required in Theorem~\ref{thm:balanced}, and the case $f(0)=0$
		is also allowed. The theorem gives the uniqueness and
		hyperbolicity of a limit cycle, provided that it exists,
		but does not guarantee its existence.
		
		Theorem~\ref{thm:balanced} may be viewed as an extension of
		Sansone's uniqueness theorem
		(see \cite{Sansone1949} or \cite[Theorem~4.3]{Zh})
		to systems with the positive odd-power damping term $-f(x)y^m$
		and a general restoring force $g(x)$.
		When $m=1$, system~\eqref{eq:odd} is a Li\'enard system.
		If $g(x)=x$, the condition $G(c_1)=G(c_2)$
		implies $c_1=-c_2$, and  Theorem \ref{thm:balanced} recovers the uniqueness
		and stability conclusions of Sansone's theorem.
		For general $g$, this condition requires the two sign-change
		points to lie at the same potential level, but does not
		require symmetry with respect to the origin.
	\end{remark}
	
	\subsection{Reduction of generalized Kukles systems}
	
	The preceding criterion will now be applied to generalized Kukles systems through changes of variables and time.
	
	Consider the following 
	planar Rayleigh--Li\'enard system
	\begin{equation}\label{k}
		\begin{aligned}
			\frac{dx}{dt}&=y,		
			\\
			\frac{dy}{dt}&=-g(x)-P_1(x)y-P_2(x)y^2-P_3(x)y^3,
		\end{aligned}
	\end{equation}
	on the strip $(\alpha,\beta)\times\mathbb R$, where
	$-\infty\leq\alpha<0<\beta\leq+\infty$. 
	This system is known as a generalized Kukles system
	\cite{Cherkas1978,CLT}, as it extends the classical cubic
	system studied by Kukles \cite{kukles} while retaining
	the cubic dependence on $y$.
	
	Assume that $g(x), P_1(x), P_2(x), P_3(x)\in C^1((\alpha,\beta))$  and $xg(x)>0$ for $x\in (\alpha, \beta)\setminus\{0\}$.  Let $\varphi\in C^2((\alpha,\beta))$. On the region
	\[
	U_\varphi
	=\{(x,y)\in(\alpha,\beta)\times\mathbb R:
	1+\varphi(x)y>0\},
	\]
	consider the change of variables and time
	\begin{equation}\label{4}
		X=x,\qquad
		Y=\frac{y}{1+\varphi(x)y},\qquad
		d\tau=(1+\varphi(x)y)\,dt.
	\end{equation}
	A direct calculation shows that transformation~\eqref{4}
	takes system~\eqref{k} into
	\begin{equation}\label{7}
		\begin{aligned}
			\frac{dX}{d\tau}&=Y,\\
			\frac{dY}{d\tau}
			&=-g(X)-A_\varphi(X)Y
			-B_\varphi(X)Y^2-C_\varphi(X)Y^3,
		\end{aligned}
	\end{equation}
	where
	\begin{equation}\label{ABC}
		\begin{aligned}
			A_\varphi(X)
			&=P_1(X)-3g(X)\varphi(X),\\
			B_\varphi(X)
			&=P_2(X)-2P_1(X)\varphi(X)
			+3g(X)\varphi^2(X),\\
			C_\varphi(X)
			&=P_3(X)+\varphi'(X)-P_2(X)\varphi(X)
			+P_1(X)\varphi^2(X)-g(X)\varphi^3(X).
		\end{aligned}
	\end{equation}
	Note that $\varphi\in C^2((\alpha,\beta))$ ensures that
	$A_\varphi,B_\varphi,C_\varphi\in C^1((\alpha,\beta))$
	in system~\eqref{7}.
	Moreover, the coordinate change in~\eqref{4}
	maps $U_\varphi$ diffeomorphically onto
	\[
	V_\varphi
	=\{(X,Y)\in(\alpha,\beta)\times\mathbb R:
	1-\varphi(X)Y>0\}.
	\]
	Indeed, its Jacobian determinant is
	\[
	\det\left(\frac{\partial(X,Y)}{\partial(x,y)}\right)
	=
	\det \begin{pmatrix}
		1 & 0\\[2mm]
		-\dfrac{\varphi'(x)y^2}{(1+\varphi(x)y)^2}
		& \dfrac{1}{(1+\varphi(x)y)^2} 
	\end{pmatrix}=\dfrac{1}{(1+\varphi(x)y)^2}>0,
	\]
	and its inverse on $V_\varphi$ is given by
	\[
	x=X,\qquad
	y=\frac{Y}{1-\varphi(X)Y}.
	\]
	Since the time reparametrization is positive on
	$U_\varphi$, the limit cycles of system~\eqref{k}
	contained in $U_\varphi$ correspond bijectively
	to those of system~\eqref{7} contained in
	$V_\varphi$, with stability and hyperbolicity preserved.
	
	To make the quadratic term in system~\eqref{7} vanish, we  set
	\[
	\rho(X)=\exp\left(\int_0^X B_\varphi(s)\,ds\right)
	\]
	and consider the further transformations
	\[
	\xi=X,\qquad
	\eta=\rho(X)Y,\qquad
	d\sigma=\frac{1}{\rho(X)}d\tau.
	\]
	Since $\rho'=B_\varphi\rho$, system~\eqref{7}
	becomes
	\begin{equation}\label{8}
		\begin{aligned}
			\frac{d\xi}{d\sigma}&=\eta,\\
			\frac{d\eta}{d\sigma}
			&=-\rho^2(\xi)g(\xi)
			-\rho(\xi)A_\varphi(\xi)\eta
			-\frac{C_\varphi(\xi)}{\rho(\xi)}\eta^3.
		\end{aligned}
	\end{equation}
	This  change is defined on the whole strip
	$(\alpha,\beta)\times\mathbb R$ and maps $V_\varphi$
	diffeomorphically onto
	\[
	W_\varphi
	=\left\{(\xi,\eta)\in(\alpha,\beta)\times\mathbb R:
	1-\frac{\varphi(\xi)}{\rho(\xi)}\eta>0\right\}.
	\]
	Indeed, its Jacobian determinant is
	\[
	\det\left(\frac{\partial(\xi,\eta)}{\partial(X,Y)}\right)
	=
	\det
	\begin{pmatrix}
		1 & 0\\[2mm]
		B_\varphi(X)\rho(X)Y & \rho(X)
	\end{pmatrix}
	=\rho(X)>0,
	\]
	and its inverse is
	\[
	X=\xi,\qquad Y=\frac{\eta}{\rho(\xi)}.
	\]
	The time reparametrization is also positive.
	Consequently, the two transformations establish
	a bijective correspondence between the limit cycles
	of system~\eqref{k} contained in $U_\varphi$ and
	those of system~\eqref{8} contained in $W_\varphi$,
	preserving stability and hyperbolicity.

	If $\varphi$ can be chosen so that $A_\varphi\equiv0$
	or $C_\varphi\equiv0$, then system~\eqref{8} reduces
	to the form~\eqref{eq:odd}, with $m=3$ or $m=1$,
	respectively. Applying Theorem~\ref{thm:balanced}
	to the resulting system yields the following criteria.
	\subsubsection{Eliminating the cubic term in system \eqref{8}}
	\begin{theorem}\label{thm2}
		Assume that system~\eqref{k} satisfies the following
		conditions:
		\begin{enumerate}
			\item[(i)]
			$g,P_1,P_2,P_3\in C^1((\alpha,\beta))$;
			
			\item[(ii)]
			$xg(x)>0$ for every
			$x\in(\alpha,\beta)\setminus\{0\}$;
			
			\item[(iii)]
			there exists $\varphi_0\in\mathbb R$ such that
			the Cauchy initial value problem
			\begin{equation}\label{22}
				\begin{aligned}
					&\frac{d\varphi}{dx}
					=-P_3(x)+P_2(x)\varphi(x)
					-P_1(x)\varphi^2(x)+g(x)\varphi^3(x),\\
					&\varphi(0)=\varphi_0,
				\end{aligned}
			\end{equation}
			admits a solution $\varphi(x)$ defined on the whole
			interval $(\alpha,\beta)$;
			
			\item[(iv)]
			there exist $\alpha<c_1<0<c_2<\beta$ with
			$\widehat G_\varphi(c_1)=\widehat G_\varphi(c_2)$
			such that
			$A_\varphi(x)<0$ (resp. $A_\varphi(x)>0$)
			for $x\in(c_1,c_2)\setminus\{0\}$, and
			$A_\varphi(x)>0$ (resp. $A_\varphi(x)<0$)
			for $x\in(\alpha,c_1)\cup(c_2,\beta)$.
			Here the functions $A_\varphi$ and $B_\varphi$
			defined in~\eqref{ABC} are
			\[
			\begin{aligned}
				A_\varphi(x)&=P_1(x)-3g(x)\varphi(x),\\
				B_\varphi(x)&=P_2(x)-2P_1(x)\varphi(x)
				+3g(x)\varphi^2(x),
			\end{aligned}
			\]
			and
			\[
			\widehat G_\varphi(x)
			=\int_0^x \left(
			g(s)\exp\left(2\int_0^s B_\varphi(r)\,dr\right)\right)\,ds.
			\]
		\end{enumerate}
		Then system~\eqref{k} has at most one limit cycle
		contained in the strip $\alpha<x<\beta$.
		If such a limit cycle exists, it is hyperbolic and
		stable (resp. unstable).
	\end{theorem}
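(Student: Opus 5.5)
The plan is to reduce system~\eqref{k} to the form~\eqref{eq:odd} with $m=1$ through the transformations~\eqref{4} and the subsequent scaling by $\rho$, and then to invoke Theorem~\ref{thm:balanced}.

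\emph{Reduction.} Let $\varphi$ be the solution of~\eqref{22} given by (iii). Since~\eqref{22} is exactly $C_\varphi\equiv0$ in~\eqref{ABC}, the function $\varphi$ lies in $C^1((\alpha,\beta))$. Its derivative is a polynomial in $\varphi$ with $C^1$ coefficients, so in fact $\varphi\in C^2((\alpha,\beta))$, as the preceding construction requires. System~\eqref{8} then becomes
\[
\dot\xi=\eta,\qquad \dot\eta=-\tilde g(\xi)-\tilde f(\xi)\eta,\qquad \tilde g=\rho^2 g,\quad \tilde f=\rho A_\varphi .
\]
We check the hypotheses of Theorem~\ref{thm:balanced} with $m=1$:
\begin{itemize}
\item[(i)] $\tilde f,\tilde g\in C^1$, because $\rho\in C^2$ and $A_\varphi\in C^1$.
\item[(ii)] $\xi\tilde g(\xi)>0$, because $\rho>0$.
\item[(iii)] $\int_0^x\tilde g=\widehat G_\varphi(x)$, and $\tilde f$ has the same sign as $A_\varphi$. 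So assumption (iv) is precisely condition (iii) of Theorem~\ref{thm:balanced}, with the same $c_1,c_2$.
\end{itemize}
Theorem~\ref{thm:balanced} therefore shows that~\eqref{8} has at most one limit cycle in the strip. If one exists, it is hyperbolic and stable (resp.\ unstable).

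\emph{Transfer back.} The correspondence established above is only between limit cycles of~\eqref{k} in $U_\varphi$ and those of~\eqref{8} in $W_\varphi$. This is the main obstacle: we must show that every limit cycle $L$ of~\eqref{k} in the strip lies in $U_\varphi$, i.e.\ $1+\varphi(x)y>0$ along $L$. The key observation is that the curve $\Gamma=\{1+\varphi(x)y=0\}$, i.e.\ $y=-1/\varphi(x)$ where $\varphi\neq0$, is invariant. Indeed,
\[
\frac{d}{dt}(1+\varphi y)=\varphi' y^2+\varphi\bigl(-g-P_1y-P_2y^2-P_3y^3\bigr).
\]
Substitute $y=-1/\varphi$ and use~\eqref{22} for $\varphi'$. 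This gives
\[
\frac{1}{\varphi^2}\bigl(-P_3+P_2\varphi-P_1\varphi^2+g\varphi^3\bigr)-g\varphi+P_1-\frac{P_2}{\varphi}+\frac{P_3}{\varphi^2}=0 .
\]
So $1+\varphi y$ vanishes identically on $\Gamma$, and since $\Gamma$ is a graph over $x$, it is a union of orbits. More cleanly, $1+\varphi y$ satisfies $\frac{d}{dt}(1+\varphi y)=(1+\varphi y)\,h(x,y)$ for a smooth $h$. Hence the sign of $1+\varphi y$ is preserved along orbits.

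A limit cycle of~\eqref{k} must surround the unique equilibrium $(0,0)$, at which $1+\varphi y=1>0$. Suppose $L$ met $\{1+\varphi y\le0\}$. Invariance would then force all of $L$ into $\{1+\varphi y<0\}$, because $L$ cannot meet $\Gamma$: $\Gamma$ consists of whole orbits that are graphs unbounded as $\varphi\to0$. But on $\{1+\varphi y<0\}$ we have $y\neq0$ with a fixed sign on each connected component. A periodic orbit there would have $x$ strictly monotone, which is impossible. Hence $L\subset U_\varphi$, and its image lies in $W_\varphi$.

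Consequently, limit cycles of~\eqref{k} in the strip correspond bijectively, with stability and hyperbolicity preserved, to limit cycles of~\eqref{8} in $W_\varphi$. This yields the claim.
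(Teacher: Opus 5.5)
Your proposal is correct and follows the paper's proof. The paper uses $C_\varphi\equiv0$ to reduce to the Li\'enard form and checks the hypotheses of Theorem~\ref{thm:balanced}. It places every closed orbit in $U_\varphi$ via the factorization $\frac{d}{dt}(1+\varphi y)=(1+\varphi y)\,h$ together with $\dot x=y$ (a closed orbit must reach $y=0$, where $1+\varphi y=1$), as you do. Your aside that the cycle surrounds the origin plays no role, and the ``graphs unbounded as $\varphi\to0$'' justification for excluding $\Gamma$ is loose but unnecessary: $\Gamma\subset\{y\neq0\}$, so the same monotonicity-of-$x$ argument rules it out, just as the paper handles orbits lying on $\{L=0\}$.
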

	\begin{remark}
		Since the right-hand side of~\eqref{22} is $C^1$
		in $(x,\varphi)$, the Cauchy initial value problem
		admits a unique local solution for every
		$\varphi_0\in\mathbb R$.
		However, this solution need not extend to the whole
		interval $(\alpha,\beta)$, so global existence is
		explicitly assumed in condition~(iii).
		Furthermore, equation~\eqref{22} is an Abel equation, for which an explicit solution
		is generally difficult to obtain.
	\end{remark}
	\subsubsection{Eliminating the linear term in system \eqref{8}}
	
	We next consider the alternative reduction obtained by choosing $\varphi$ so that $A_\varphi\equiv0$.
	
	\begin{theorem}\label{thm3}
		Assume that system~\eqref{k} satisfies the following
		conditions:
		\begin{enumerate}
			\item[(i)]
			$g,P_1,P_2,P_3\in C^1((\alpha,\beta))$;
			
			\item[(ii)]
			$xg(x)>0$ for every
			$x\in(\alpha,\beta)\setminus\{0\}$;
			
			\item[(iii)]
			the function
			\[
			\varphi(x)=\frac{P_1(x)}{3g(x)},
			\qquad x\in(\alpha,\beta)\setminus\{0\},
			\]
			extends to a function
			$\varphi\in C^2((\alpha,\beta))$;
			
			\item[(iv)]
			there exist $\alpha<c_1<0<c_2<\beta$ with
			$\widehat G_\varphi(c_1)=\widehat G_\varphi(c_2)$
			such that
			$C_\varphi(x)<0$ (resp. $C_\varphi(x)>0$)
			for $x\in(c_1,c_2)\setminus\{0\}$, and
			$C_\varphi(x)>0$ (resp. $C_\varphi(x)<0$)
			for $x\in(\alpha,c_1)\cup(c_2,\beta)$.
			Here the functions $B_\varphi$ and $C_\varphi$
			defined in~\eqref{ABC} are
			\[
			\begin{aligned}
				B_\varphi(x)
				&=P_2(x)-2P_1(x)\varphi(x)
				+3g(x)\varphi^2(x),\\
				C_\varphi(x)
				&=P_3(x)+\varphi'(x)-P_2(x)\varphi(x)
				+P_1(x)\varphi^2(x)-g(x)\varphi^3(x),
			\end{aligned}
			\]
			and
			\[
			\widehat G_\varphi(x)
			=\int_0^x
			g(s)\exp\left(2\int_0^s B_\varphi(r)\,dr\right)\,ds.
			\]
		\end{enumerate}
		Then system~\eqref{k} has at most one limit cycle
		contained in
		\[
		U_\varphi
		=\{(x,y)\in(\alpha,\beta)\times\mathbb R:
		1+\varphi(x)y>0\}.
		\]
		If such a limit cycle exists, it is hyperbolic and
		stable (resp. unstable).
	\end{theorem}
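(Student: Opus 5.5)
The plan is to reduce Theorem~\ref{thm3} to Theorem~\ref{thm:balanced} through the two changes of variables and time constructed above. By condition~(iii), $\varphi=P_1/(3g)$ extends to a function in $C^2((\alpha,\beta))$. Then $A_\varphi=P_1-3g\varphi$ vanishes on $(\alpha,\beta)\setminus\{0\}$, and by continuity $A_\varphi\equiv0$ on the whole interval. Since $\varphi\in C^2$, we also have $B_\varphi,C_\varphi\in C^1((\alpha,\beta))$, and therefore $\rho=\exp(\int_0^X B_\varphi)$ is $C^2$ and positive. With $A_\varphi\equiv 0$, system~\eqref{8} becomes
\[
\frac{d\xi}{d\sigma}=\eta,\qquad
\frac{d\eta}{d\sigma}=-\tilde g(\xi)-\tilde f(\xi)\eta^3,\qquad
\tilde g=\rho^2 g,\quad \tilde f=\frac{C_\varphi}{\rho}.
\]
This is exactly of the form~\eqref{eq:odd} with $m=3$ on the strip $(\alpha,\beta)\times\mathbb R$.

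Next I would verify the hypotheses of Theorem~\ref{thm:balanced} for $\tilde f,\tilde g$.
\begin{enumerate}
\item[(a)] Regularity: $\tilde g=\rho^2g$ and $\tilde f=C_\varphi/\rho$ are $C^1$, as products and quotients of $C^1$ functions with $\rho>0$.
\item[(b)] Sign condition: $x\tilde g(x)=\rho^2(x)\,xg(x)>0$ for $x\neq0$.
\item[(c)] Balanced potential: since $\rho^2(s)=\exp\bigl(2\int_0^s B_\varphi\bigr)$, the potential $\tilde G(x)=\int_0^x\tilde g$ coincides with $\widehat G_\varphi$. Hence $\widehat G_\varphi(c_1)=\widehat G_\varphi(c_2)$ is precisely $\tilde G(c_1)=\tilde G(c_2)$.
\item[(d)] Damping signs: since $\rho>0$, $\tilde f$ has the same sign as $C_\varphi$ everywhere, so condition~(iv) transfers verbatim to condition~(iii) of Theorem~\ref{thm:balanced}, including the parenthetical ``resp.'' case.
\end{enumerate}
Theorem~\ref{thm:balanced} then gives at most one limit cycle of~\eqref{8} in the strip. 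If it exists, it is hyperbolic and stable (resp.\ unstable).

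Finally I would transport this back. The composite transformation is a diffeomorphism $U_\varphi\to W_\varphi$ with positive time rescalings, so limit cycles of~\eqref{k} in $U_\varphi$ correspond bijectively to limit cycles of~\eqref{8} in $W_\varphi\subset(\alpha,\beta)\times\mathbb R$. Stability and hyperbolicity are preserved under this correspondence.

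The point requiring some care is this preservation. Hyperbolicity is characterized by the characteristic exponent $\oint\operatorname{div}\neq0$. This integral is not literally invariant under time rescaling, but its sign is, because the Poincaré return map is conjugated by the diffeomorphism restricted to a transversal. Hence a multiplier $\neq1$ stays $\neq1$, and stable cycles stay stable. At most one cycle of~\eqref{8} lies in the strip, hence at most one lies in $W_\varphi$. Therefore system~\eqref{k} has at most one limit cycle in $U_\varphi$, with the stated hyperbolicity and stability.

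I expect the main obstacle to be only minor: the removable-singularity step at $x=0$, where one must confirm that $A_\varphi(0)=0$ follows from continuity of the $C^2$ extension and does not require separate information on $P_1(0)$. The rest is bookkeeping already done in the preceding subsection.
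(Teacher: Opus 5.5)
Your proposal is correct and follows the paper's proof essentially step for step: $A_\varphi\equiv0$ by continuity of the extension, reduction of \eqref{8} to the form \eqref{eq:odd} with $m=3$, transfer of the sign and equal-potential conditions via $\rho>0$ and $\int_0^\xi\rho^2g=\widehat G_\varphi$, and transport back through the correspondence between $U_\varphi$ and $W_\varphi$. One small correction to your side remark: over a closed orbit, $\oint\operatorname{div}$ is in fact invariant in value, not merely in sign, under a positive time rescaling $\mu$, since the extra term integrates to $\oint d(\ln\mu)=0$; this agrees with your observation that the conjugacy preserves the return map and hence the multiplier.
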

	\begin{remark}
		Although $\varphi(x)=P_1(x)/(3g(x))$ is explicitly
		given for $x\ne0$ in condition~(iii), conditions~(i)
		and~(ii) do not guarantee that it extends to a $C^2$
		function at $x=0$, where $g(0)=0$.
		For instance, if $g(x)=x^3/3$ and $P_1(x)=x^2$,
		then $\varphi(x)=1/x$, which does not even admit
		a continuous extension at $x=0$.
		Thus condition~(iii) must be checked separately;
		if it fails, Theorem~\ref{thm3} cannot be applied.
		Furthermore, the uniqueness conclusion is restricted
		to limit cycles contained in $U_\varphi$, where
		transformation~\eqref{4} is a diffeomorphism onto
		$V_\varphi$ and preserves time orientation.
		
	\end{remark}
	\subsection{Proofs of the main results}
	
	\begin{proof}[Proof of Theorem~\ref{thm:balanced}]
		Here we only consider the conditions outside the parentheses.
		The conditions in parentheses can be treated by the transformation
		$(x,y,t)\mapsto(x,-y,-t)$.

		It follows from conditions~(i) and~(ii) that $g(0)=0$
		and the origin is the unique equilibrium of
		system~\eqref{eq:odd}. Thus every limit cycle surrounds
		the origin. Set
		\[
		h=G(c_1)=G(c_2),\qquad
		\mathcal E(x,y)=\frac{y^2}{2}+G(x).
		\]
		Along solutions of system~\eqref{eq:odd}, we have
		\begin{equation}\notag
			\dot{\mathcal E}=-f(x)y^{m+1}.
		\end{equation}
		Since $xg(x)>0$ for $x\ne0$,
		\[
		G(x)<h \quad \text{for }x\in(c_1,c_2),
		\qquad
		G(x)>h \quad \text{for }
		x\in(\alpha,c_1)\cup(c_2,\beta).
		\]
		
		Let $\Gamma$ be a closed orbit of system~\eqref{eq:odd} in the strip $\alpha<x<\beta$.
		We claim that $\Gamma\subset\{(x,y)\in (\alpha,\beta)\times \mathbb R: \mathcal E(x,y)>h\}$.
		The level curve $\mathcal E=h$ is a simple closed curve
		surrounding the origin. On this curve,
		\[
		\dot{\mathcal E}=-f(x)y^{m+1}\geq0,
		\]
		with strict inequality except possibly at $(c_1,0)$,
		$(c_2,0)$ and the two intersections with the $y$-axis.
		At every point where the inequality is strict,
		the orbit through that point crosses the level
		curve from $\{\mathcal E<h\}$ to $\{\mathcal E>h\}$. 
		
		The same crossing direction holds at the exceptional
		points. To determine the crossing direction at $(c_1,0)$,
		compare system~\eqref{eq:odd} with the conservative system
		\begin{equation}\label{1}
			\dot x=y,\qquad \dot y=-g(x).
		\end{equation}
		Let $\gamma_1$ and $\gamma_2$ be short orbit arcs of
		systems~\eqref{eq:odd} and~\eqref{1},
		respectively, through $(c_1,0)$. Notice that $\gamma_2$
		coincides locally with the level curve $\mathcal E=h$.
		Since $g(c_1)<0$, both arcs can locally be written as
		$x=X_1(y)$ and $x=X_2(y)$ for $|y|<\varepsilon$, where
		$X_1(0)=X_2(0)=c_1$ and
		\[
		X_1'(y)=
		\frac{y}{-g(X_1(y))-f(X_1(y))y^m},
		\qquad
		X_2'(y)=\frac{y}{-g(X_2(y))}.
		\]
		Both denominators are positive in a sufficiently small
		neighborhood of $(c_1,0)$. Moreover, $X_j'(y)$ has the
		sign of $y$, so $X_j(y)>c_1$ for $y\ne0$ sufficiently
		small. For $c_1<x<c_2$ and $y\ne0$ in this neighborhood,
		\[
		\frac{y}{-g(x)-f(x)y^m}
		-\frac{y}{-g(x)}
		=
		\frac{f(x)y^{m+1}}
		{-g(x)\left(-g(x)-f(x)y^m\right)}<0.
		\]
		Thus the strict comparison theorem (see  \cite[Theorem 6.1 of Chapter 1]{Hale})
		gives $
		X_1(y)<X_2(y)$  for  $0<y<\varepsilon$ and $X_1(y)>X_2(y)$ for $-\varepsilon<y<0$. 
		Hence $\gamma_1$ crosses $\{\mathcal E=h\}$
		from inside to outside at $(c_1,0)$.
		The same conclusions at $(c_2,0)$ and at either intersection with the $y$-axis follow by an analogous	comparison. Consequently, $\Gamma$ cannot intersect $\{\mathcal E=h\}$. Next, we show that
		$\Gamma$ cannot  lie entirely in $\{\mathcal E<h\}$.
		Indeed, this region is contained in the strip
		$c_1<x<c_2$, where
		\[
		\dot{\mathcal E}=-f(x)y^{m+1}\geq0,
		\]
		with strict inequality whenever $x\ne0$ and $y\ne0$.
		Thus, if $\Gamma\subset\{\mathcal E<h\}$, then
		$\dot{\mathcal E}$ is nonnegative along $\Gamma$
		and strictly positive on a nonempty arc.
		Integration over one period gives
		\[
		0=\oint_\Gamma \dot{\mathcal E}\,dt>0,
		\]
		a contradiction. Therefore,
		\[
		\Gamma\subset
		\{(x,y)\in(\alpha,\beta)\times\mathbb R:
		\mathcal E(x,y)>h\}.
		\] 
		
		We now show that the integral of the divergence
		$-mf(x)y^{m-1}$ of the vector field defined by
		system~\eqref{eq:odd} along any nonconstant closed 
		orbit $\Gamma$ is negative, i.e.,
		\begin{equation}\label{2}
			-m\oint_\Gamma f(x)y^{m-1}\,dt<0.
		\end{equation}
		This implies that every nonconstant closed  orbit
		is a hyperbolic stable limit cycle. Suppose that two limit cycles existed. They would be nested, and the compact annulus
		between them would contain no equilibria. The boundary separating their basins of
		attraction would contain a nonempty compact invariant set disjoint from both cycles.
		By the Poincar\'e–Bendixson theorem, this set would contain a periodic orbit that is
		not attracting, contradicting \eqref{2}. Hence the limit cycle is unique.

		It remains to prove \eqref{2}.
		Set
		\[
		D(x,y)=y^2+2G(x)-2h.
		\]
		Since $D>0$ on $\Gamma$ and
		$\dot D=-2f(x)y^{m+1}$, periodicity gives
		\begin{equation}\label{3}
			0=\oint_\Gamma d(\ln (D))=\oint_\Gamma \frac{\dot D}{D}\,dt
			=-2\oint_\Gamma \frac{f(x)y^{m+1}}{D}\,dt.
		\end{equation}
		On the parts of $\Gamma$ where $x<c_1$ or $x>c_2$,
		we have $G(x)>h$ and $f(x)>0$. Hence
		\[
		\frac{f(x)y^{m+1}}{D}
		=
		\frac{y^2}{y^2+2(G(x)-h)}f(x)y^{m-1}
		\leq f(x)y^{m-1}.
		\]
		On the parts where $x\in(c_1,c_2)\setminus\{0\}$,
		we have $G(x)<h$, $f(x)<0$ and $y\ne0$, so
		\[
		\frac{f(x)y^{m+1}}{D}
		=
		\frac{y^2}{y^2-2(h-G(x))}f(x)y^{m-1}
		<f(x)y^{m-1}.
		\]
		The comparison is also non-strictly valid at
		$x=0,c_1,c_2$, by continuity.
		Since $\Gamma$ surrounds the origin, the strict
		inequality holds on a nonempty arc. Therefore,
		by \eqref{3},
		\[
		\oint_\Gamma f(x)y^{m-1}\,dt
		>
		\oint_\Gamma\frac{f(x)y^{m+1}}{D}\,dt
		=0.
		\]
		This proves \eqref{2} and
		completes the proof. 
	\end{proof}
	
	\begin{proof}[Proof of Theorem~\ref{thm2}]
		
		We first show that every  closed orbit of
		system~\eqref{k} in the strip $\alpha<x<\beta$
		is contained in $U_\varphi$.
		By condition~(i) and the ordinary differential equation~\eqref{22}
		in condition~(iii), we have
		$\varphi\in C^2((\alpha,\beta))$ and $C_\varphi\equiv0$.
		Set
		\[
		L(x,y)=1+\varphi(x)y.
		\]
		Along solutions of system~\eqref{k}, we obtain
		\[
		\frac{dL}{dt}
		=L\left(
		-g(x)\varphi(x)
		+\bigl(g(x)\varphi^2(x)-P_1(x)\varphi(x)\bigr)y
		-P_3(x)y^2
		\right).
		\]
		Since $\varphi(x)\ne0$ whenever $L(x,y)=0$,
		the set $\{L=0\}$ is a smooth invariant curve.
		By uniqueness of solutions, every closed orbit
		either lies entirely on this curve or is contained
		in one of the regions $\{L>0\}$ and $\{L<0\}$.
		On the other hand, every closed orbit 
		contains a point $(x_0,0)$ on the $x$-axis, where
		$L(x_0,0)=1>0$.
		Therefore, $L>0$ along the entire periodic orbit,
		which proves its containment in $U_\varphi$.
		
		Since $C_\varphi\equiv0$, system~\eqref{8} reduces to
		\[
		\frac{d\xi}{d\sigma}=\eta,\qquad
		\frac{d\eta}{d\sigma}
		=-\widehat g(\xi)-\widehat f(\xi)\eta,
		\]
		where
		\[
		\widehat g(\xi)=\rho^2(\xi)g(\xi),\qquad
		\widehat f(\xi)=\rho(\xi)A_\varphi(\xi).
		\]
		These functions belong to $C^1((\alpha,\beta))$.
		Since $\rho>0$, $\widehat f$ has the same sign
		as $A_\varphi$, and $\xi\widehat g(\xi)>0$
		for $\xi\ne0$. Moreover,
		\[
		\int_0^\xi \widehat g(s)\,ds
		=\widehat G_\varphi(\xi).
		\]
		Consequently, condition~(iv) verifies condition~(iii)
		of Theorem~\ref{thm:balanced}.
		Thus system~\eqref{8} has at most one limit cycle
		in the strip $\alpha<\xi<\beta$, and any such cycle
		is hyperbolic and stable (resp. unstable).
		Every limit cycle of system~\eqref{k}  lies in $U_\varphi$ and corresponds to a limit
		cycle of system~\eqref{8} in $W_\varphi$,
		with stability and hyperbolicity preserved.
		Therefore, system~\eqref{k} has at most one limit cycle
		in the strip $\alpha<x<\beta$.
		If such a limit cycle exists, it is hyperbolic and
		stable (resp. unstable).
	\end{proof}
	
	\begin{proof}[Proof of Theorem~\ref{thm3}]
		By condition~(iii), we have
		$P_1(x)=3g(x)\varphi(x)$ for $x\ne0$.
		By continuity, this identity also holds at $x=0$.
		Hence $A_\varphi\equiv0$, and system~\eqref{8}
		reduces to
		\[
		\frac{d\xi}{d\sigma}=\eta,\qquad
		\frac{d\eta}{d\sigma}
		=-\widehat g(\xi)-\widehat f(\xi)\eta^3,
		\]
		where
		\[
		\widehat g(\xi)=\rho^2(\xi)g(\xi),\qquad
		\widehat f(\xi)=\frac{C_\varphi(\xi)}{\rho(\xi)}.
		\]
		By conditions~(i) and~(iii), these functions belong
		to $C^1((\alpha,\beta))$.
		Since $\rho>0$, $\widehat f$ has the same sign
		as $C_\varphi$, and $\xi\widehat g(\xi)>0$
		for $\xi\ne0$. Moreover,
		\[
		\int_0^\xi \widehat g(s)\,ds
		=\widehat G_\varphi(\xi).
		\]
		Consequently, condition~(iv) verifies condition~(iii)
		of Theorem~\ref{thm:balanced}.
		Thus, by that theorem with $m=3$, system~\eqref{8}
		has at most one limit cycle in the strip
		$\alpha<\xi<\beta$, and any such cycle is
		hyperbolic and stable (resp. unstable).
		
		By the one-to-one correspondence between the limit
		cycles of system~\eqref{k} in $U_\varphi$ and those
		of system~\eqref{8} in $W_\varphi$, system~\eqref{k}
		has at most one limit cycle contained in $U_\varphi$.
		If such a limit cycle exists, it is hyperbolic and
		stable (resp. unstable).
		If every limit cycle in the original strip is
		contained in $U_\varphi$, the same conclusions
		hold throughout the strip.
	\end{proof}
	\section{Applications}\label{sec:applications}We apply the preceding criteria to three polynomial systems.
	In the first example, the linear term can be eliminated
	explicitly. In the other two examples, we eliminate
	the cubic term using solutions of the Abel equation~\eqref{22}.
	The first application resolves an open uniqueness problem,
	the second provides an alternative proof of a known
	uniqueness result, and the third establishes uniqueness
	in part of the parameter space and gives sufficient
	conditions for the nonexistence of periodic orbits.

	\subsection{A hybrid van der Pol--Rayleigh oscillator}
	A hybrid van der Pol--Rayleigh oscillator 
	\begin{equation}\label{CTX}
		\dot x=y,\qquad \dot y=-x-\alpha y-\beta y^3- x^2y,
	\end{equation}
	has been used to model
	the self-sustained walking behaviors of humans or bipedal robots in the literature; see  \cite{FD}.
	Chen, Tang and Xiao  \cite{CTX} studied the
	global dynamics of  system \eqref{CTX}.
	They noted that
	uniqueness of limit cycles remained open when 
	$\alpha=0$ and $-1<\beta<-1/3$; see \cite[p.4]{CTX}.
	Although existence was established for $\beta<-1/3$
	sufficiently close to $-1/3$, uniqueness in this parameter
	range was adopted as an assumption in their subsequent
	analysis.
	This unresolved problem motivates our first application,
	which concerns
	\begin{equation}\label{12}
		\dot x=y,\qquad
		\dot y=-x-x^2y-\beta y^3,
		\qquad \beta\in\mathbb R.
	\end{equation}
	
	\begin{proposition}\label{co1}
		If $-1<\beta<-1/3$, system~\eqref{12}
		has at most one limit cycle in $\mathbb R^2$.
		If such a limit cycle exists, it is hyperbolic and stable.
		If $\beta\notin(-1,-1/3)$, the system has no
		closed orbit.
	\end{proposition}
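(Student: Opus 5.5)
The plan is to apply Theorem~\ref{thm3} to system~\eqref{12}, which is system~\eqref{k} with $g(x)=x$, $P_1(x)=x^2$, $P_2\equiv0$ and $P_3\equiv\beta$, on the whole line $(\alpha,\beta)=(-\infty,+\infty)$. Conditions~(i) and~(ii) are immediate. For condition~(iii), $\varphi(x)=P_1(x)/(3g(x))=x/3$ extends to a polynomial, so it is $C^2$. Substituting into~\eqref{ABC} gives $A_\varphi\equiv0$ and
\[
B_\varphi(x)=-\tfrac{2}{3}x^3+\tfrac13x^3=-\tfrac13x^3,
\qquad
C_\varphi(x)=\beta+\tfrac13+\tfrac{2}{27}x^4 .
\]
Hence $\rho(x)=\exp(-x^4/12)$ is even, and $\widehat G_\varphi(x)=\int_0^x s\,e^{-s^4/6}\,ds$ is even as well.

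For $\beta<-1/3$, set $c_2=\bigl(-\tfrac{27}{2}(\beta+\tfrac13)\bigr)^{1/4}$ and $c_1=-c_2$. Then $\widehat G_\varphi(c_1)=\widehat G_\varphi(c_2)$ by evenness. Moreover $C_\varphi<0$ on $(c_1,c_2)$ and $C_\varphi>0$ for $|x|>c_2$, which is the case outside the parentheses in condition~(iv). Theorem~\ref{thm3} therefore gives at most one limit cycle in $U_\varphi=\{1+xy/3>0\}$, and any such cycle is hyperbolic and stable.

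For nonexistence I would split into three ranges. For $\beta\ge0$, the energy $\mathcal E=(x^2+y^2)/2$ satisfies $\dot{\mathcal E}=-x^2y^2-\beta y^4\le0$, with equality only on a set containing no orbit arc. Integrating over a period then excludes closed orbits. For $-1/3\le\beta<0$, I would pass to the transformed system~\eqref{8}, namely $\dot\xi=\eta$, $\dot\eta=-\widehat g(\xi)-\widehat f(\xi)\eta^3$ with $\widehat f=C_\varphi/\rho\ge0$, where $\widehat f$ vanishes at most at $\xi=0$. The energy $\eta^2/2+\widehat G_\varphi(\xi)$ is then nonincreasing and strictly decreasing off a thin set, so there is no closed orbit in $W_\varphi$. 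Equivalently, system~\eqref{12} has no closed orbit in $U_\varphi$.

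The key remaining step in both the uniqueness and nonexistence parts is to show that every closed orbit of~\eqref{12} lies in $U_\varphi$. A direct computation along~\eqref{k} gives, on the hyperbola $L=1+\varphi y=0$,
\[
\dot L=\frac{C_\varphi(x)}{\varphi(x)^2}.
\]
So the hyperbola $xy=-3$ is crossed in a fixed direction wherever $C_\varphi$ has constant sign. In particular, for $\beta\ge-1/3$ every crossing goes into $\{L>0\}$. A closed orbit meets the $x$-axis, where $L=1$, so it cannot leave $\{L>0\}$ and the argument closes. For $\beta<-1/3$ the sign of $C_\varphi$ changes along the hyperbola at $|x|=c_2$, so orbits may enter $\{L\le0\}$ near the vertices $|x|<c_2$. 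I would then restrict to the branches in the second and fourth quadrants. Using the explicit flow there, together with the symmetry $(x,y)\mapsto(-x,-y)$, I would show that an orbit entering $\{L<0\}$ through the part with $|x|<c_2$ cannot return, for instance by comparing with the invariant-curve structure or by a monotonicity estimate on $xy$ in that region. I expect this step to be the main obstacle.

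The case $\beta\le-1$ must also produce nonexistence, which Theorem~\ref{thm3} alone does not give. There I would first try a Dulac function of the form $B=|1+xy/3|^{s}e^{\kappa x^4}$ (or simply $|L|^s$) on the whole plane. The exponent $s$ and constant $\kappa$ would be chosen so that $\operatorname{div}(B\,X)$ has one sign precisely when $\beta\le-1$; the threshold $\beta=-1$ should appear as the condition making the resulting quadratic form in $(x^2,y^2)$ semidefinite. If that fails, I would try to obtain nonexistence via the sign of $C_\varphi$ along orbits that are forced outside $U_\varphi$.
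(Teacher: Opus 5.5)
Your verification of the hypotheses of Theorem~\ref{thm3} with $\varphi=x/3$ is correct, and so is your nonexistence argument for $\beta\ge-1/3$. The two steps you leave open are genuine gaps, and one idea missing from the proposal closes both.

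\textbf{Containment for $-1<\beta<-1/3$.} Theorem~\ref{thm3} only controls limit cycles inside $U_\varphi=\{xy>-3\}$. You give no argument that a closed orbit cannot cross $xy=-3$; since $C_\varphi$ changes sign there at $|x|=c_2$, you only sketch a possible return argument. Without it you do not get uniqueness in $\mathbb R^2$. The fix is to apply your own identity $\dot L=C_\psi/\psi^2$ on $\{1+\psi y=0\}$ to a different choice, $\psi(x)=x$. Then $C_\psi=\beta+1$ is constant, so on the hyperbola $xy=-1$ one has $\frac{d}{dt}(1+xy)=(1+\beta)y^2$. This has a fixed sign for $\beta\ne-1$, and for $\beta=-1$ the hyperbola is invariant. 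A closed orbit meets $y=0$, where $1+xy=1$, so it can never reach $xy=-1$. Hence every closed orbit lies in $U_0=\{xy>-1\}\subset U_\varphi$, which is exactly how the paper passes from $U_\varphi$ to $\mathbb R^2$.

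\textbf{Nonexistence for $\beta\le-1$.} Your Dulac ansatz cannot work. Take $F=(y,-x-x^2y-\beta y^3)$, $L=1+xy/3$ and $B=|L|^se^{\kappa x^4}$. Then $\operatorname{div}(BF)=(B/L)E$, where $E=(\operatorname{div}F+4\kappa x^3y)L+s\dot L$ is a polynomial equal to $s\dot L\not\equiv0$ on $L=0$.
\begin{itemize}
\item On the whole plane, the divergence therefore changes sign across $L=0$ unless $s=0$; for $s<0$, $B$ is not even defined there.
\item For $s=0$ the divergence is $e^{\kappa x^4}(-x^2-3\beta y^2+4\kappa x^3y)$, which is indefinite.
\item Even on $U_\varphi$ alone, write $E=x^2P(xy)+y^2Q(xy)$. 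Once $\beta<-1/3$, no pair $(s,\kappa)$ gives $P$ and $Q$ a common sign on $xy>-3$.
\end{itemize}
What works is again the hyperbola $1+xy=0$, on $U_0$:
\[
\operatorname{div}\Bigl(\frac{e^{x^4/2}}{(1+xy)^3}\,F\Bigr)
=\frac{e^{x^4/2}}{(1+xy)^4}\bigl(2x^2(1+xy)^2-3(\beta+1)y^2\bigr)\ge0
\quad\text{for }\beta\le-1 .
\]
This is the paper's reduction \eqref{4}--\eqref{8} with $\varphi^*=x$ in disguise. It gives $A_{\varphi^*}=-2x^2$, $C_{\varphi^*}=\beta+1$, and transformed divergence $2\xi^2e^{\xi^4/4}-3(\beta+1)e^{-\xi^4/4}\eta^2\ge0$. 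So there is no closed orbit in $U_0$, and the containment above finishes the case $\beta\le-1$.
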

	\begin{proof}
		
		First, we claim that every closed orbit of
		system~\eqref{12} lies in
		\[
		U_0=\{(x,y)\in\mathbb R^2:1+xy>0\}.
		\]
		Indeed, on the curve $1+xy=0$, the slope of the
		vector field satisfies
		\[
		\left.\frac{dy}{dx}\right|_{xy=-1}
		=-\frac{x+x^2y+\beta y^3}{y}
		=-\beta y^2.
		\]
		Since the slope of the curve $1+xy=0$ is $y^2$,
		we obtain
		\[
		\left.\frac{dy}{dx}\right|_{xy=-1}
		\begin{cases}
			>y^2, & \text{if }\beta<-1,\\[4pt]
			<y^2, & \text{if }\beta>-1.
		\end{cases}
		\]
		That is, every intersection between an orbit and
		$\{xy=-1\}$ has the same crossing direction, which
		precludes a closed orbit from intersecting this curve when $\beta\ne -1$. When $\beta=-1$, it is not difficult to check that $1+xy=0$ is invariant and leads to the same result.
		On the other hand, every closed orbit contains a
		point with $y=0$, since $\dot x=y$.
		At this point, $xy>-1$.
		Consequently, every such orbit lies entirely in $U_0$.

		In the notation of~\eqref{k}, we have
		\[
		g(x)=x,\qquad P_1(x)=x^2,\qquad P_2(x)=0,
		\qquad P_3(x)=\beta.
		\]
		Choose $\varphi(x) = x/3$, which is the $C^2(\mathbb R)$
		extension of $P_1(x)/(3g(x))$. Then
		\[
		A_\varphi=0,\qquad
		B_\varphi=-\frac{x^3}{3},\qquad
		C_\varphi=\beta+\frac{1}{3}+\frac{2x^4}{27},
		\qquad \rho(x)=\exp\left(-\frac{x^4}{12}\right).
		\]
		System~\eqref{8} becomes
		\begin{equation}
			\begin{aligned}\label{13}
				\frac{d\xi}{d\sigma}&=\eta,\\
				\frac{d\eta}{d\sigma}
				&=-\xi \exp\left(-\frac{\xi^4}{6}\right)
				-\exp\left(\frac{\xi^4}{12}\right)
				\left(\beta+\frac{1}{3}+\frac{2\xi^4}{27}\right)\eta^3\\
				&=:-\widehat g(\xi)-\widehat f(\xi)\eta^3.
			\end{aligned}
		\end{equation}
		For $\beta<-1/3$, set
		\[
		c=\left(-\frac{27}{2}\left(\beta+\frac13\right)\right)^{1/4}.
		\]
		Then $C_\varphi<0$ for $|x|<c$ and $C_\varphi>0$
		for $|x|>c$. Moreover,   $\widehat g(\xi)$ is odd,  so $\widehat G_\varphi(\xi)$ is even. In particular,  $\widehat G_\varphi(-c)=\widehat G_\varphi(c)$. By  Theorem~\ref{thm3}, system \eqref{12} has at most one limit cycle in $U_\varphi=\{1+xy/3>0\}$, and
		any such cycle is hyperbolic and stable.
		Since $U_0\subset U_\varphi$, system \eqref{12} has at most one limit cycle in $\mathbb R^2$ when $\beta<-1/3$.
		
		For $\beta\geq-1/3$, the divergence of the vector field of system \eqref{13} satisfies 
		\[
		{\bf div}(\eta,-\widehat g(\xi)-\widehat f(\xi)\eta^3)=-3\widehat f(\xi)\eta^2=-3\exp\left(\frac{\xi^4}{12}\right)
		\left(\beta+\frac{1}{3}+\frac{2\xi^4}{27}\right)\eta^2\leq0
		\]
		and it is not identically zero on any nonempty open set.
		Thus,  by the Bendixson--Dulac theorem, system  \eqref{13} has no closed orbits  when   $\beta\geq-1/3$. The same conclusion holds
		for system \eqref{12} in $U_\varphi$. Since every closed orbit of system~\eqref{12}
		lies in $U_0\subset U_\varphi$, it follows that
		system~\eqref{12} has no closed orbits in $\mathbb R^2$.
		
		Finally, for $\beta\leq-1$,  apply transformations \eqref{4}--\eqref{8} with $\varphi^*(x)=x$.
		Then
		\[
		A_{\varphi^*}=-2x^2,\qquad
		B_{\varphi^*}=x^3,\qquad
		C_{\varphi^*}=\beta+1,\qquad
		\rho^*(x)=\exp\left(\frac{x^4}{4}\right).
		\]
		System~\eqref{8} becomes
		\[
		\begin{aligned}
			\frac{d\xi}{d\sigma}&=\eta,\\
			\frac{d\eta}{d\sigma}
			&=-\xi\exp\left(\frac{\xi^4}{2}\right)
			+2\xi^2\exp\left(\frac{\xi^4}{4}\right)\eta
			-(\beta+1)\exp\left(-\frac{\xi^4}{4}\right)\eta^3.
		\end{aligned}
		\]
		The divergence of its vector field is
		\[
		2\xi^2\exp\left(\frac{\xi^4}{4}\right)
		-3(\beta+1)\exp\left(-\frac{\xi^4}{4}\right)\eta^2
		\geq0,
		\]
		and is not identically zero on any nonempty open set.
		Thus, by the Bendixson--Dulac theorem, the transformed
		system has no closed orbits when $\beta\leq-1$. Thus, system \eqref{12} has no closed orbits in  $U_{\varphi^*}=U_0$.  
		Since every closed orbit of system~\eqref{12} lies in
		$U_0$, 
		system~\eqref{12} also has no closed orbits.
		This completes the proof.
	\end{proof}
	
	\subsection{A family of systems with homogeneous terms}
	
	Cima, Gasull and Ma\~nosas~\cite{CGM} constructed systems
	\[
	\frac{dx}{dt}=P_m(x,y),\qquad
	\frac{dy}{dt}=Q_n(x,y),
	\]
	with at least $(m+n)/2$ limit cycles, where $m\ne n$
	and $P_m$ and $Q_n$ are homogeneous polynomials
	of odd degrees $m$ and $n$, respectively.
	In 2021, Gasull~\cite[Problem~2]{Gasull} asked whether
	$(m+n)/2$ is also an upper bound for the number of
	limit cycles of such systems.
	This problem remains unresolved even in the simplest
	case $m=1$ and $n=3$;
	see~\cite[Problem~3(i)]{Gasull}.
	
	Gasull~\cite[Problem~3(ii)]{Gasull} also asked for
	a simple proof of the uniqueness of the limit cycle
	of the system
	\begin{equation}\label{14}
		\dot x=y,\qquad
		\dot y=-x^3+dx^2y+y^3,
		\qquad d\in\mathbb R,
	\end{equation}
	which is a subfamily of the above systems with
	$m=1$ and $n=3$.
	Using suitably constructed Dulac functions,
	Gasull and Giacomini~\cite{GG} established that
	system~\eqref{14} has at most one limit cycle for
	$-3/\sqrt[3]{2}<d<0$, and that any such cycle
	is hyperbolic and unstable.
	More recently, by analysing the heteroclinic
	separatrices at infinity, Zhuang and Liu~\cite{ZL}
	proved that there exists $d_*<0$ such that
	system~\eqref{14} has exactly one limit cycle for
	$d\in(d_*,0)$ and none otherwise.
	Here, we apply Theorem~\ref{thm2} to obtain an
	alternative proof of uniqueness and hyperbolicity
	for $-3/\sqrt[3]{2}\leq d<0$.
	
	\begin{proposition}\label{co2}
		If
		\[
		-\frac{3}{\sqrt[3]{2}}\leq d<0,
		\]
		system~\eqref{14} has at most one limit cycle
		in $\mathbb R^2$.
		If such a limit cycle exists, it is hyperbolic and unstable.
		If $d\geq0$, the system has no  closed  orbits.
	\end{proposition}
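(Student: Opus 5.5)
The plan is to apply Theorem~\ref{thm2} for $-3/\sqrt[3]{2}\le d<0$, and to treat $d\ge0$ by a direct Bendixson argument. In the notation of~\eqref{k}, system~\eqref{14} has
\[
g(x)=x^3,\qquad P_1(x)=-dx^2,\qquad P_2(x)=0,\qquad P_3(x)=-1,
\]
on the strip $\alpha=-\infty$, $\beta=+\infty$. Conditions~(i) and~(ii) of Theorem~\ref{thm2} are immediate. For $d\ge0$, the divergence of~\eqref{14} is $dx^2+3y^2\ge0$, and it does not vanish on any open set. Bendixson's criterion therefore excludes closed orbits; this part needs no transformation.

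For $d<0$, the Abel equation~\eqref{22} reads
\[
\varphi'=1+dx^2\varphi^2+x^3\varphi^3 .
\]
Two structural facts will drive the argument.

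First, the equation is invariant under $\varphi(x)\mapsto-\varphi(-x)$. Hence the solution with $\varphi_0=0$ is odd wherever it is defined. In that case $A_\varphi=-x^2\bigl(d+3x\varphi\bigr)$ is even and $B_\varphi=2dx^2\varphi+3x^3\varphi^2$ is odd. It follows that $\widehat G_\varphi$ is even, and the balance condition in~(iv) holds automatically with $c_1=-c_2$. The whole question then reduces to $x>0$.

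Second, the substitution $u=x\varphi$ makes the equation autonomous in its nonlinearity:
\[
u'=\frac ux+x\,p(u),\qquad p(u)=u^3+du^2+1 .
\]
Near $x=0$ we have $u\approx x^2>0$, so $d+3u<0$ and $A_\varphi>0$. This is the parenthetical case of~(iv), and it gives the ``unstable'' conclusion. Condition~(iv) then amounts to requiring that $u$ crosses the level $-d/3$ exactly once on $(0,\infty)$ and stays above it afterwards. That crossing yields $c_2$ with $A_\varphi<0$ for $x>c_2$.

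The main obstacle is condition~(iii), global existence of $\varphi$ on $\mathbb R$. For $u$ large and positive, $u'\sim xu^3$, which suggests blow-up in finite $x$. My first attempt is to find a trapping region in the $(x,u)$ plane. The threshold $d=-3/\sqrt[3]{2}$ should enter here: it is exactly the value at which a suitable comparison function (built from $p$, or from a curve $u=\kappa$ with $\kappa$ the critical point $-2d/3$ of $u^3+du^2$) keeps $u$ bounded, or keeps $u$ from escaping once it passes $-d/3$.

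If no such barrier works with $\varphi_0=0$, the fallback is a shooting argument in $\varphi_0$. The sets of initial values whose solutions blow up to $+\infty$ and to $-\infty$ in $u$ are open, disjoint and nonempty, so some $\varphi_0$ yields a global solution. The price of this fallback is that oddness is lost, and the equality $\widehat G_\varphi(c_1)=\widehat G_\varphi(c_2)$ would then have to be recovered separately. For that reason I would push hard for the symmetric solution first.

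Once~(iii) and~(iv) are in place, Theorem~\ref{thm2} gives at most one limit cycle in $\mathbb R^2$. Its proof already shows that every closed orbit lies in $U_\varphi$, and that any limit cycle is hyperbolic and unstable.
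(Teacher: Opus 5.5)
Your setup agrees with the paper: the coefficients, the odd solution with $\varphi_0=0$, the evenness of $\widehat G_\varphi$, the parenthetical sign case giving instability, and Bendixson's criterion for $d\ge0$. The plan breaks at the step you call the main obstacle, because condition (iii) of Theorem~\ref{thm2} on all of $\mathbb R$ is false here.

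For $-3/\sqrt[3]{4}<d<0$ one has $p(u)\ge 1+4d^3/27>0$ for $u\ge0$. So $u=x\varphi$ stays positive and increasing on $x>0$, with $u\ge \delta x^2/2$. Then $u'\ge xp(u)$ with $p(u)\sim u^3$ forces blow-up at a finite $M$, and no trapping region can keep $u$ bounded.

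The shooting fallback fails for a more basic reason than loss of oddness: it only produces a value $\varphi_0^+$ that works on $[0,\infty)$. A forward-global solution must have $u\to u_-$, the negative root of $p$. This root is repelling ($p'(u_-)>0$), so $\varphi_0^+$ is unique. By your own symmetry $\varphi(x)\mapsto-\varphi(-x)$, the value that works on $(-\infty,0]$ is $-\varphi_0^+$. The two agree only at $\varphi_0=0$, which blows up. Hence for these $d$ no solution of \eqref{22} exists on $\mathbb R$, and your route cannot reach Theorem~\ref{thm2}.

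The missing idea is that Theorem~\ref{thm2} is stated on an arbitrary strip $(\alpha,\beta)$. The paper keeps the odd solution on its maximal interval $(-M,M)$ and applies the theorem with $(\alpha,\beta)=(-M,M)$. It then proves separately that every closed orbit of \eqref{14} lies in $|x|<M$:
\begin{itemize}
\item $\{L=0\}$ is invariant, $\varphi>0$ on $(0,M)$, and $\varphi\to+\infty$ as $x\to M^-$. Hence the branch $y=-1/\varphi(x)$, $0<x<M$, is the orbit through the regular point $(M,0)$.
\item A closed orbit with rightmost point $(b,0)$, $b\ge M$, would have its lower arc meet this orbit, which is impossible. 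The left side follows by symmetry.
\item Since $L=1$ on the $x$-axis, invariance then gives $L>0$ along the orbit.
\end{itemize}

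You also leave the sign pattern of $A_\varphi$ unproved, and you misplace the threshold. It enters not through $\kappa=-2d/3$ or a bound on $u$, but through $p(-d/3)=1+2d^3/27\ge0$. With $h=-d-3x\varphi$, at any zero of $h$ one gets $h'=d/x-3x(1+2d^3/27)<0$. So $u$ crosses $-d/3$ at most once, and only upward. A crossing exists for two reasons. If $M<\infty$, then $h\to-\infty$. If $M=+\infty$ and $0<u<-d/3$ throughout, then $\varphi'\ge 1+2d^3/27\ge0$, which would force $x\varphi\to+\infty$, a contradiction.
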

	
	\begin{proof}
		For $d\geq0$,  the divergence satisfies
		\[
		{\bf div}(y,-x^3+dx^2y+y^3)
		=dx^2+3y^2\geq0,
		\] and does not vanish identically on any nonempty open set. 
		Thus, by the Bendixson--Dulac theorem,
		system~\eqref{14} has no closed orbits.
		
		Next, suppose that $-3/\sqrt[3]{2}\leq d<0$. In the notation of~\eqref{k},
		we have
		\[
		g(x)=x^3,\qquad P_1(x)=-d x^2,\qquad
		P_2(x)=0,\qquad P_3(x)=-1.
		\]
		Consider the Cauchy initial value problem
		\[
		\varphi'
		=1+dx^2\varphi^2+x^3\varphi^3,
		\qquad \varphi(0)=0.
		\]
		Let $(-M,M)$, where $0<M\leq+\infty$, denote
		the maximal interval of existence of its solution. 
		By uniqueness of solutions, $\varphi(x)$ is odd. 
		Moreover, $\varphi'(0)=1$, and $\varphi'(x)=1$
		when $\varphi=0$. Hence $
		\varphi(x)>0$ for  $0<x<M$.

		We first claim that every closed orbit of
		system~\eqref{14} is contained in
		\[
		U_{\varphi,M}
		=\{(x,y):|x|<M,\ 1+\varphi(x)y>0\}.
		\]
		Setting $L(x,y)=1+\varphi(x)y$ for $|x|<M$,
		we obtain
		\[
		\dot L
		=L\left(
		-x^3\varphi+
		\bigl(x^3\varphi^2+d  x^2\varphi\bigr)y+y^2
		\right).
		\]
		Thus $\{L=0\}$ is invariant.
		Suppose first that $M<+\infty$.
		Consider $L(x,y)=1+\varphi(x)y$ for $x>0$.
		By maximality and the positivity of $\varphi$,
		\[
		\varphi(x)\longrightarrow+\infty
		\qquad\text{as }x\to M^-.
		\]
		Consequently, the invariant orbit
		\[
		y=-\frac{1}{\varphi(x)},\qquad 0<x<M,
		\]
		satisfies
		\[
		y\longrightarrow0\quad\text{as }x\to M^-,
		\qquad
		y\longrightarrow-\infty\quad\text{as }x\to0^+.
		\]
		Therefore, the invariant orbit can be extended
		to $(M,0)$ in finite backward time because $(M,0)$ is a regular point.
		In other words, the orbit of system~\eqref{14}
		through $(M,0)$ can be represented by $\{L=0\}$
		in the fourth quadrant.
		By existence and uniqueness of solutions,
		no closed orbit of system~\eqref{14} can intersect
		the invariant orbit
		\[
		y=-\frac{1}{\varphi(x)},\qquad 0<x<M,
		\]
		or pass through $(M,0)$.
		Moreover, since $\dot x=y$, the rightmost point
		of every closed orbit lies on the $x$-axis.
		If its $x$-coordinate were greater  than or equal to $M$,
		the lower arc of the closed orbit would necessarily
		intersect the invariant orbit above.
		Thus every closed orbit lies to the left of $x=M$.
		By symmetry, it also lies to the right of $x=-M$.
		Furthermore, every closed orbit 
		contains a point $(x_0,0)$ on the $x$-axis, where
		$L(x_0,0)=1>0$.
		Therefore, every closed orbit is contained in
		\[
		U_{\varphi,M}
		=\{(x,y):|x|<M,\ 1+\varphi(x)y>0\}.
		\]
		When $M=+\infty$, containment in $|x|<M$
		is immediate.
		Therefore, it suffices to consider the limit cycles
		of system~\eqref{14} in the strip $(-M,M)\times\mathbb R$.
		
		By construction, $C_\varphi\equiv0$, and
		\[
		A_\varphi(x)
		=x^2\bigl(-d-3x\varphi(x)\bigr),
		\qquad
		B_\varphi(x)
		=2d x^2\varphi(x)+3x^3\varphi^2(x).
		\]
		We next claim that there exists a unique $c\in(0,M)$
		such that
		\begin{equation}\label{11}
			A_\varphi(x)>0\quad(0<|x|<c),\qquad
			A_\varphi(x)<0\quad(c<|x|<M).
		\end{equation}
		Set $h(x)=-d-3x\varphi(x)$ for $0<x<M$.
		Then
		\[
		h'(x)=-3\varphi(x)
		-3x\bigl(1+dx^2\varphi^2(x)+x^3\varphi^3(x)\bigr).
		\]
		At any zero of $h(x)=0$, we have $x\varphi(x)=-d/3$,
		and hence
		\[
		h'(x)=\frac{d}{x}
		-3x\left(1+\frac{2d^3}{27}\right)<0,
		\]
		since $-3/\sqrt[3]{2}\leq d<0$.
		Thus $h$ has at most one zero in $(0,M)$. Moreover, if $M<+\infty$, then $h(0)=-d>0$ and
		$\lim_{x\to M^-}h(x)=-\infty$.
		Thus $h$ has at least one zero in $(0,M)$.
		If $M=+\infty$, suppose that $h$ has no zero. Then 
		$h(x)>0$ for $0<x<M$, and hence
		$0<x\varphi(x)<-d/3$ for all $x>0$.
		Consequently,
		\[
		\varphi'(x)
		=1+dx^2\varphi^2(x)+x^3\varphi^3(x)
		\geq1+\frac{2d^3}{27}\geq0.
		\]
		Thus,  $\varphi(x)$  is nondecreasing. Since  $\varphi(x)>0$  for $x > 0$, it follows that  $x\varphi (x)\to+\infty$  as $x\to +\infty$,
		contradicting $x\varphi(x)<-d/3$.
		Therefore $h$ also has at least one zero in $(0,M)$
		when $M=+\infty$.
		Consequently, there exists a unique $c\in(0,M)$
		such that
		\[
		h(x)>0\quad(0<x<c),\qquad
		h(x)<0\quad(c<x<M).
		\]
		Since $A_\varphi(x)=x^2h(x)$ and $\varphi$ is odd,
		we conclude \eqref{11}.

		Finally, we rewrite system~\eqref{8} as
		\[
		\begin{aligned}
			\frac{d\xi}{d\sigma}&=\eta,\\
			\frac{d\eta}{d\sigma}
			&=-\rho^2(\xi)\xi^3
			-\rho(\xi)A_\varphi(\xi)\eta,
		\end{aligned}
		\]
		where
		\[
		\rho(\xi)
		=\exp\left(\int_0^\xi \left(2d s^2\varphi(s)+3s^3\varphi^2(s)\right)\,ds\right).
		\]
		Since $\varphi(x)$ is odd, $\widehat g(\xi)=\rho^2(\xi)\xi^3$ is odd,
		and 
		$\widehat G_\varphi$ is even. In particular,
		\[
		\widehat G_\varphi(-c)=\widehat G_\varphi(c).
		\]
		Theorem~\ref{thm2} gives that system~\eqref{14}
		has at most one limit cycle in $U_{\varphi,M}$
		and any such cycle is hyperbolic and unstable.
		Since every closed orbit lies in $U_{\varphi,M}$,
		the conclusion holds in $\mathbb R^2$.
		This completes the proof.
	\end{proof}
	
	\subsection{A modified hybrid van der Pol--Rayleigh oscillator}
	Erlicher, Trovato and Argoul~\cite{ETA} introduced
	the modified hybrid van der Pol--Rayleigh oscillator
	\begin{equation}\label{w}
		\dot x=y,\qquad
		\dot y=-x-a_1y-(a_2x^2+a_3xy+a_4y^2)y,
	\end{equation}
	where $a_1<0$, $a_2\geq0$ and $a_4\geq0$,
	to model lateral pedestrian forces on a rigid floor.
	Chen et al.~\cite{CGLZ} studied system~\eqref{w}
	with $a_3>0$ and established the uniqueness of
	limit cycles.
	When $a_3<0$, the scaling
	$(x,y)\mapsto
	\bigl((-a_3)^{-1/2}x,(-a_3)^{-1/2}y\bigr)$
	transforms system~\eqref{w} into
	\begin{equation}\label{17}
		\dot x=y,\qquad
		\dot y=-x-ay-(\alpha x^2-xy+\beta y^2)y,
	\end{equation}
	where $a<0$, $\alpha\geq0$ and $\beta\geq0$.
	Lu and Zhang~\cite{LZ} investigated the bifurcation
	diagrams and global phase portraits of system~\eqref{17}.
	Although their Theorem~1.1 states that the system
	has at most one limit cycle throughout the parameter
	space, Remark~1.2 and Section~5 identify
	\[
	\{(\alpha,\beta,a):a<0,\ 0<\alpha\beta<1/4\}
	\]
	as an unresolved region for counting limit cycles.
	Their Conjecture~5.1 asserts that the system has
	exactly one limit cycle in this region, which is
	hyperbolic and stable.
	Proposition~\ref{co3}(ii) below provides an independent
	proof of uniqueness, hyperbolicity and stability
	under condition~\eqref{18}, provided that a limit
	cycle exists.
	However, Proposition~\ref{co3}(i) disproves the
	existence assertion over the full conjectured region.
	For example, $\alpha=\beta=1/4$ and $a=-2$
	belong to this region, but the corresponding system
	has no closed  orbits.
	\begin{proposition}\label{co3}
		Suppose that $\alpha,\beta>0$. Then the following statements hold.
		\begin{itemize}
			\item[(i)] If $\alpha+\beta\leq1$ and $a\leq-2$,
			then the system has no closed orbits.
			
			\item[(ii)] If
			\begin{equation}\label{18}
				\max\left\{-4\alpha-3\beta,-\frac{3}{\sqrt{2}}\right\}
				\leq a<0,
			\end{equation}
			then the system has at most one limit cycle in $\mathbb R^2$.
			If such a limit cycle exists, it is hyperbolic and stable.
		\end{itemize}
	\end{proposition}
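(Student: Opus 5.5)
The plan is to treat \eqref{17} as a generalized Kukles system \eqref{k} with
\[
g(x)=x,\qquad P_1(x)=a+\alpha x^2,\qquad P_2(x)=-x,\qquad P_3(x)=\beta .
\]
The key observation is that \eqref{17} is invariant under $(x,y)\mapsto(-x,-y)$. A direct check shows that if $\varphi$ solves the Abel equation \eqref{22}, which here reads
\[
\varphi'=-\beta-x\varphi-(a+\alpha x^2)\varphi^2+x\varphi^3,
\]
then so does $-\varphi(-x)$. Hence the solution with $\varphi(0)=0$ is odd. In that case $A_\varphi=a+\alpha x^2-3x\varphi$ is even and $B_\varphi=-x-2(a+\alpha x^2)\varphi+3x\varphi^2$ is odd. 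It follows that $\rho$ is even, $\widehat g=\rho^2 x$ is odd, and $\widehat G_\varphi$ is even. So the balance condition $\widehat G_\varphi(-c)=\widehat G_\varphi(c)$ in Theorem~\ref{thm2}(iv) is automatic once $A_\varphi$ has a single sign change at some $c>0$. For (ii) I will therefore follow the proof of Proposition~\ref{co2} closely, now in the stable case. Note that $A_\varphi(0)=a<0$, as required.

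For (ii) I will carry out the following steps in order.
\begin{enumerate}
\item Let $(-M,M)$ be the maximal interval of the odd solution. Since $\varphi'(0)=-\beta<0$, $\varphi<0$ on $(0,\varepsilon)$. I will show $\varphi<0$ on all of $(0,M)$ by examining the sign of $\varphi'$ where $\varphi=0$; there $\varphi'=-\beta<0$, so $\varphi$ cannot return to zero. Consequently $\varphi\to-\infty$ as $x\to M^-$ if $M<\infty$.
\item Show that every closed orbit lies in $U_{\varphi,M}=\{|x|<M,\ 1+\varphi(x)y>0\}$. Here $\{L=0\}$, with $L=1+\varphi y$, is invariant by the computation in the proof of Theorem~\ref{thm2}. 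In the first quadrant this invariant orbit is $y=-1/\varphi(x)$, which tends to $0$ as $x\to M^-$. As in Proposition~\ref{co2}, it therefore passes through $(M,0)$ and blocks closed orbits from reaching $|x|\ge M$. Finally, every closed orbit meets $y=0$, where $L=1$.
\item Show that $h(x)=a+\alpha x^2-3x\varphi(x)$ has exactly one zero $c\in(0,M)$, with $h<0$ on $(0,c)$ and $h>0$ on $(c,M)$.
\begin{itemize}
\item Existence of a zero: if $M<\infty$ then $h\to+\infty$. If $M=\infty$, I argue by contradiction: assuming $h<0$ throughout bounds $-x\varphi$ and $\alpha x^2$, and this is incompatible with the growth forced by the Abel equation. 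I expect the hypothesis $a\ge-4\alpha-3\beta$ to enter here, for instance through comparing $h'(0^+)$-type quantities, since $h''(0)=2\alpha+6\beta$.
\item Uniqueness of the zero: at any zero, $x\varphi=(a+\alpha x^2)/3$. Substituting this into $h'=2\alpha x-3\varphi-3x\varphi'$ and using the Abel equation should give $h'>0$ exactly when $a\ge-3/\sqrt2$, in analogy with the bound $1+2d^3/27\ge0$ in Proposition~\ref{co2}.
\end{itemize}
\end{enumerate}
Theorem~\ref{thm2} then yields at most one limit cycle, which is hyperbolic and stable. Step~3 is the main obstacle: the algebra at a zero of $h$ is messier than in Proposition~\ref{co2} because of the extra terms $\beta$, $\alpha x^2$ and $P_2=-x$.

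For (i) I plan a Bendixson--Dulac argument after a transformation \eqref{4}--\eqref{8} with an explicit $\varphi^*$, as in the last step of Proposition~\ref{co1}. The divergence of \eqref{8} is $-\rho A_{\varphi^*}-3(C_{\varphi^*}/\rho)\eta^2$, so it suffices to find $\varphi^*$ with $A_{\varphi^*}$ and $C_{\varphi^*}$ of one common sign. The first attempt is $\varphi^*=kx$, for which
\[
A=a+(\alpha-3k)x^2,\qquad C=\beta+k+(k+ak^2)x^2+k^2(\alpha-k)x^4 .
\]
The conditions $a\le-2$ and $\alpha+\beta\le1$ suggest $k=1/2$ or a nearby value, which makes $k+ak^2\le0$. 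The constant term $\beta+k$ resists, however, so I expect to have to multiply by an additional Dulac factor of the form $|L|^{r}$. This works because the proof of Theorem~\ref{thm2} shows $\dot L=L\cdot(\text{explicit quadratic in }y)$, so the factor contributes a controllable quadratic term to the divergence. I would tune $r$ and $k$ so that the total divergence becomes a negative semidefinite quadratic form in $(x,y)$ under these hypotheses. I would also need the containment of closed orbits in $U_{\varphi^*}$, checked by a slope comparison on $1+\varphi^*y=0$ as in Proposition~\ref{co1}.
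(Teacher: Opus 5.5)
\textbf{Part (i).} This is where your proposal fails. The identity $\dot L=L\cdot(\text{quadratic in }y)$ from the proof of Theorem~\ref{thm2} uses $C_\varphi\equiv0$. In general
\[
\dot L=L\bigl(-g\varphi+(g\varphi^2-P_1\varphi)y-P_3y^2\bigr)+C_\varphi\,y^2,
\]
so for $\varphi^*=kx$ the extra factor $|L|^r$ does not contribute a controllable quadratic term.

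There is a more decisive obstruction. The transformation \eqref{4}--\eqref{8} followed by a factor $|L|^{r}$ amounts to using a Dulac function $D=e^{\psi(x)}|1+kxy|^{s}$ on the vector field $V$ of \eqref{17}, for some $\psi$ and $s$. Since $\operatorname{div}(DV)(0,0)=-a\,D(0,0)>0$, you would need nonnegativity, not negative semidefiniteness. Now evaluate on the axes:
\[
\operatorname{div}(DV)\big|_{y=0}=e^{\psi}\bigl(-a-(\alpha+sk)x^2\bigr),\qquad
\operatorname{div}(DV)\big|_{x=0}=e^{\psi(0)}\bigl(-a+\psi'(0)y+(sk-3\beta)y^2\bigr).
\]
Both axes lie in $U_{\varphi^*}$, and nonnegativity there forces $sk\le-\alpha<0<3\beta\le sk$. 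So no choice of $k$ and $r$ works.

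The missing idea is a simple barrier. On the ray $y=x>0$,
\[
\frac{dy}{dx}=-1-a+(1-\alpha-\beta)x^2\ge1 .
\]
Hence orbits cross the ray only from $y<x$ to $y>x$; the ray is invariant when $a=-2$ and $\alpha+\beta=1$. A closed orbit, however, must run clockwise from the positive $y$-axis, which is above the ray, to its rightmost point $(b,0)$, which is below it.

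\textbf{Part (ii).} Your outline follows the paper's route: odd Abel solution with $\varphi(0)=0$, $\varphi<0$ on $(0,M)$, the invariant graph $y=-1/\varphi$ confining closed orbits, a single sign change of $A_\varphi$, then Theorem~\ref{thm2}. But the one substantive step is left undone, and your heuristic for it is wrong.

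Existence of the zero needs only $\alpha>0$: since $\varphi<0$, $A_\varphi\ge a+\alpha x^2\to+\infty$. The two bounds in \eqref{18} are needed jointly for uniqueness. At a zero, set $u=a+\alpha x^2\in(a,0)$; then
\[
xA_\varphi'(x)=-u\Bigl(1-\frac{2u^2}{9}\Bigr)+x^2(2\alpha+3\beta+u).
\]
The bound $a\ge-3/\sqrt2$ makes the first term positive. The second term is guaranteed nonnegative only if $a\ge-2\alpha-3\beta$, which \eqref{18} does not provide. So $h'>0$ does not follow from $a\ge-3/\sqrt2$ alone, and the two terms must be balanced. The paper does this via $2/9\le 1/a^2$ and $3\beta\ge-(a+4\alpha)u/a$, splitting according to the sign of $a+4\alpha$. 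This gives
\[
xA_\varphi'\ge\frac{\alpha^2x^4(\alpha x^2-a)}{a^2}>0 .
\]
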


	\begin{proof}
		We first prove the nonexistence assertion (i).
		Suppose that $\alpha+\beta\leq1$ and $a\leq-2$.
		On the line $y=x$ with $x>0$, we have
		\[
		\left.\frac{dy}{dx}\right|_{y=x}
		=-1-a+(1-\alpha-\beta)x^2\geq1.
		\]
		Since this line has slope $1$ and $\dot x=y>0$,
		orbits can only cross it from $y<x$ to $y>x$,
		unless $a=-2$ and $\alpha+\beta=1$, when it is invariant. 
		However, any closed
		orbit runs from the positive $y$-axis, where $y>x$,
		to its rightmost point $(b,0)$, where $y<x$.
		This contradiction excludes closed orbits.

		We now prove assertion (ii).
		In the notation of~\eqref{k}, we have
		\[
		g(x)=x,\qquad P_1(x)=a+\alpha x^2,\qquad
		P_2(x)=-x,\qquad P_3(x)=\beta.
		\]
		Consider the Cauchy initial value problem
		\begin{equation}\label{19}
			\varphi'
			=-\beta-x\varphi-(a+\alpha x^2)\varphi^2
			+x\varphi^3,\qquad \varphi(0)=0.
		\end{equation}
		Let $(-N,N)$, where $0<N\leq+\infty$, denote
		the maximal interval of existence of its solution.
		By uniqueness of solutions, $\varphi(x)$ is odd.
		Since $\varphi'(0)=-\beta< 0$ and $\varphi'(x)=-\beta$ whenever $\varphi(x)=0$, we have 	$\varphi(x)<0$ for $0<x<N$.
		We next show  that every closed orbit of
		system~\eqref{17} is contained in
		\[
		U_{\varphi,N}
		=\{(x,y):|x|<N,\ 1+\varphi(x)y>0\}.
		\]
		Indeed, set $L(x,y)=1+\varphi(x)y$.
		Since $C_\varphi\equiv0$, the set $\{L=0\}$ is invariant.
		If $N<+\infty$, then
		$\varphi(x)\to-\infty$ as $x\to N^-$.
		The invariant graph $y=-1/\varphi(x)$ in the first
		quadrant tends to $+\infty$ as $x\to0^+$ and to $0$
		as $x\to N^-$.
		A periodic orbit whose rightmost point has abscissa
		greater than $N$ would have its upper arc intersect
		this graph.
		Equality would place the periodic orbit on the
		nonperiodic orbit through $(N,0)$.
		Both alternatives contradict uniqueness of solutions.
		By central symmetry, every periodic orbit is contained
		in $|x|<N$.
		Since it contains a point with $y=0$, invariance of
		$\{L=0\}$ then gives $L>0$ along the orbit.
		The same conclusion holds immediately when $N=+\infty$. Therefore, it suffices to consider the limit cycles
		of system~\eqref{17} in the strip $(-N,N)\times\mathbb R$.
		
		Since $\varphi(x)$ satisfies \eqref{19}, we have 
		$C_\varphi\equiv0$ and
		\[
		\begin{aligned}
			A_\varphi(x)
			&=a+\alpha x^2-3x\varphi(x),\\
			B_\varphi(x)
			&=-x-2(a+\alpha x^2)\varphi(x)+3x\varphi^2(x).
		\end{aligned}
		\]
		We claim that there exists a unique $c\in(0,N)$
		such that
		\[
		A_\varphi(x)<0\quad(|x|<c),\qquad
		A_\varphi(x)>0\quad(c<|x|<N).
		\]
		Indeed, we consider $0<x<N$. By calculation,
		\[
		A_\varphi'(x)=2\alpha x-3\varphi(x)-3x\left(-\beta-x\varphi(x)-(a+\alpha x^2)\varphi^2(x)
		+x\varphi^3(x)\right).
		\]
		At any point where $A_\varphi(x)=0$ with $x>0$,
		we have
		\[
		3x\varphi(x)=a+\alpha x^2,\qquad
		a<a+\alpha x^2<0.
		\]
		Hence,
		\[
		xA_\varphi'(x)
		=-(a+\alpha x^2)
		\left(1-\frac{2(a+\alpha x^2)^2}{9}\right)
		+x^2(2\alpha+3\beta+a+\alpha x^2).
		\]
		By~\eqref{18}, $a^2\leq9/2$ and $a+4\alpha+3\beta\geq0$.
		At any positive zero of $A_\varphi$,  since
		$a<a+\alpha x^2<0$, we have
		\[
		0<\frac{a+\alpha x^2}{a}<1.
		\]
		If $a+4\alpha\geq0$, then
		\[
		3\beta>0\geq
		-\frac{(a+4\alpha)(a+\alpha x^2)}{a}.
		\]
		If $a+4\alpha<0$, then
		\[
		3\beta\geq-(a+4\alpha)\geq
		-\frac{(a+4\alpha)(a+\alpha x^2)}{a}.
		\]
		Consequently,
		\[
		\begin{aligned}
			xA_\varphi'(x)
			&\geq
			-(a+\alpha x^2)
			\left(1-\frac{(a+\alpha x^2)^2}{a^2}\right)
			+x^2\left(2\alpha
			-\frac{4\alpha(a+\alpha x^2)}{a}\right)\\
			&=\frac{\alpha^2x^4(\alpha x^2-a)}{a^2}>0.
		\end{aligned}
		\]
		Thus $A_\varphi$ has at most one zero in $(0,N)$. If $N<+\infty$, then $\varphi(x)\to -\infty$ as $x\to N^-$, so $A_\varphi(x)\to+\infty$. Since $A_\varphi(0)=a<0$,
		continuity implies that $A_\varphi(x)$ has at least one zero in $(0,N)$.
		When $N=+\infty$, since $\varphi(x)<0$ for $0<x<N$,
		\[
		A_\varphi (x)\geq a+\alpha x^2,
		\]
		so 
		$\lim_{x\to+\infty}A_\varphi (x)=+\infty$.
		Thus $	A_\varphi (x)$ has at least one zero in $(0,N)$.
		Consequently, there exists a unique $c\in(0,N)$
		such that
		\[
		A_\varphi(x)<0\quad(0<x<c),\qquad
		A_\varphi(x)>0\quad(c<x<N).
		\]
		Since  $\varphi$ is odd,
		the claimed signs of $A_\varphi$ follow.

		Finally, let
		\[
		\rho(x)
		=\exp\left(\int_0^x B_\varphi(s)\,ds\right).
		\]
		System~\eqref{8} becomes
		\[
		\begin{aligned}
			\frac{d\xi}{d\sigma}&=\eta,\\
			\frac{d\eta}{d\sigma}
			&=-\rho^2(\xi)\xi
			-\rho(\xi)A_\varphi(\xi)\eta.
		\end{aligned}
		\]
		Since $\varphi$ is odd, $B_\varphi$ is odd and
		$\rho$ is even.
		Hence $\widehat g(\xi)=\rho^2(\xi)\xi$ is odd,
		and 
		$\widehat G_\varphi$ is even. In particular,
		\[
		\widehat G_\varphi(-c)=\widehat G_\varphi(c).
		\]
		Theorem~\ref{thm2} therefore gives that
		system~\eqref{17} has at most one limit cycle
		in $U_{\varphi,N}$, and any such cycle is hyperbolic
		and stable. Since every closed orbit lies in $	U_{\varphi,N}$,
		the conclusion holds in $\mathbb R^2$.
		This completes the proof.
	\end{proof}
	
	\subsection{Comparison with existing uniqueness criteria} 
	In this subsection, we recall a uniqueness criterion
	due to Chen, Tang and Xiao~\cite{CTX2023} and explain
	why it does not apply directly to our three examples
	in the parameter regions of interest.
	
	\begin{theorem}[{Chen, Tang and Xiao,
			\cite[Theorem~1]{CTX2023}}]\label{t11}
		Consider
		\begin{equation}\label{1q}
			\dot x=y,\qquad
			\dot y=-|x|^m\operatorname{sgn}(x)-f(x,y)y,
		\end{equation}
		where $1\leq m<+\infty$ and $f$ is continuous
		on $\mathbb R^2$, with sufficient regularity
		to ensure existence and uniqueness of solutions
		to the initial value problems.
		Suppose that, for all sufficiently small
		$\varepsilon>0$ and every $0<k<1$,
		\begin{equation}\label{111}
			\begin{aligned}
				k^{(1-m)/2}
				f\bigl(kx,k^{(1+m)/2}y\bigr)
				&\leq f(x,y),
				&&\forall(x,y)\in\mathbb R^2,\\
				k^{(1-m)/2}
				f\bigl(kx,k^{(1+m)/2}y\bigr)
				&\ne f(x,y),
				&&x\ne0,\ 0<|y|<\varepsilon.
			\end{aligned}
		\end{equation}
		Then system~\eqref{1q} has at most one limit cycle.
		If such a cycle exists, it is stable.
	\end{theorem}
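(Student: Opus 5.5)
The plan is a scaling-comparison argument. It exploits the fact that the restoring force $|x|^m\operatorname{sgn}(x)$ is homogeneous under the weighted dilation
\[
T_k(x,y)=\bigl(kx,\;k^{(1+m)/2}y\bigr),\qquad 0<k\le 1 .
\]

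\emph{Step 1 (how the dilation acts on orbits).} Let $(x(t),y(t))$ solve system~\eqref{1q}. Put $X=kx$, $Y=k^{(1+m)/2}y$ and $\tau=k^{(1-m)/2}t$. A direct computation gives
\[
\frac{dX}{d\tau}=Y,\qquad
\frac{dY}{d\tau}=-|X|^m\operatorname{sgn}(X)-f_k(X,Y)Y,
\]
where
\[
f_k(X,Y)=k^{(m-1)/2}f\bigl(X/k,\;k^{-(1+m)/2}Y\bigr).
\]
Apply the first inequality in~\eqref{111} at the point $(X/k,\,k^{-(1+m)/2}Y)$. This gives $f(X,Y)\le f_k(X,Y)$ everywhere. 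By the second condition in~\eqref{111}, the inequality is strict whenever $X\ne0$ and $0<|Y|<\varepsilon'$ for a suitable $\varepsilon'>0$. Consequently, if $\Gamma$ is a closed orbit of~\eqref{1q}, then $T_k\Gamma$ is a closed orbit of the system with damping $f_k\ge f$.

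\emph{Step 2 (transversality).} Along $T_k\Gamma$, compare the field of~\eqref{1q} with the field of the $f_k$-system. Both have first component $Y$. Their second components differ by $(f_k-f)Y^2\ge0$ once multiplied by $Y$, i.e. after forming the cross product with the tangent vector. Hence the flow of~\eqref{1q} crosses $T_k\Gamma$ weakly in one direction, namely outward, since less damping pumps energy. The crossing is strict on the arcs near the $x$-axis where $f_k\ne f$. At the points where the two fields are tangent, I will use the same local strict-comparison argument as in the proof of Theorem~\ref{thm:balanced}: write both orbits as graphs $x=X_j(y)$ near the $x$-axis and apply the comparison theorem of \cite{Hale}. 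This should show that the original orbit still leaves the region bounded by $T_k\Gamma$ and never re-enters it.

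\emph{Step 3 (uniqueness).} Suppose $\Gamma_1\subset\operatorname{int}\Gamma_2$ are two limit cycles surrounding the origin. The curves $T_k\Gamma_2$ shrink to the origin as $k\to0^+$, and they depend continuously on $k$. Define
\[
k^*=\sup\{k\in(0,1]:\ T_k\Gamma_2\subset\overline{\operatorname{int}\Gamma_1}\}.
\]
Then $0<k^*<1$, and $T_{k^*}\Gamma_2$ touches $\Gamma_1$ from inside at some point $p$. By Step 2 the flow of~\eqref{1q} crosses $T_{k^*}\Gamma_2$ outward, at least near $p$, in the strict comparison sense. This is incompatible with $\Gamma_1$, an orbit of the same flow, lying outside $T_{k^*}\Gamma_2$ and touching it at $p$. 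The contradiction gives uniqueness. Every closed orbit meets the $x$-axis, so the strict region $0<|Y|<\varepsilon'$ near the axis is available on every cycle and hence in the argument.

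\emph{Step 4 (stability).} For the unique cycle $\Gamma$, Step 2 shows that the annuli between $T_k\Gamma$ and $\Gamma$ are positively invariant, and each is crossed outward. Hence orbits starting inside $\Gamma$, away from the origin, tend to $\Gamma$, so $\Gamma$ is stable from inside. On the outside, I expect Poincar\'e--Bendixson together with uniqueness to force $\Gamma$ to be attracting: there is no other cycle, and by the inside analysis the origin is a repeller.

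The main obstacle is the tangency analysis in Steps 2 and 3. The inequality $f\le f_k$ is strict only near the $x$-axis, so the contact point $p$ may lie where the two vector fields coincide. There I would first try to slide the contact toward the axis. If that fails, I would use the analyticity-free local comparison along the graphs $x=X_j(y)$ to propagate the strict ordering from the strict region along the whole contact arc.
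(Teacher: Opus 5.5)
The paper only quotes this theorem from \cite{CTX2023} and gives no proof, so I assess your argument on its own terms. There is a genuine gap in the outer half of Step~4. Uniqueness of the cycle, Poincar\'e--Bendixson and a repelling origin do not make $\Gamma$ attracting from outside. A unique cycle can be semistable, with the exterior orbits near $\Gamma$ escaping to infinity. Poincar\'e--Bendixson only says that a \emph{bounded} exterior semi-orbit accumulates on $\Gamma$, and nothing in your argument bounds those semi-orbits.

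The missing idea is to use dilations with $K>1$ as outer barriers. Applying \eqref{111} with $k=1/K$ at the point $(X,Y)$ gives
$f_K(X,Y)=K^{(m-1)/2}f\bigl(X/K,K^{-(1+m)/2}Y\bigr)\le f(X,Y)$,
strictly when $X\ne0$ and $0<|Y|<\varepsilon$. So $T_K\Gamma$ is a closed orbit of a \emph{less} damped system, and your Step~2 computation shows that the flow of \eqref{1q} crosses it weakly inward. Applying $T_K$ to the inclusion $T_{1/K}\Gamma\subset\operatorname{int}\Gamma$ gives $\Gamma\subset\operatorname{int}T_K\Gamma$. Hence the closed annulus between $\Gamma$ and $T_K\Gamma$ is positively invariant and contains no equilibrium. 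Poincar\'e--Bendixson together with uniqueness then gives attraction from outside.

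The rest of your plan is sound, and the contact difficulty you flag closes along the lines you suggest. Suppose $C=T_{k^*}\Gamma_2$ touches $\Gamma_1$ from inside at a point with $Y>0$. Write the upper arcs as graphs $Y=y_C(X)$ and $Y=y_{\Gamma_1}(X)$. Since $f_{k^*}\ge f$, $y_C$ is a subsolution of $y'=-g(X)/y-f(X,y)$, of which $y_{\Gamma_1}$ is a solution. Comparison run backward from the contact abscissa $X_0$ gives $y_C\ge y_{\Gamma_1}$ for $X<X_0$. Together with $y_C\le y_{\Gamma_1}$, this forces $y_C\equiv y_{\Gamma_1}$ all the way back to the left axis crossing $a_C<0$ of $C$. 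Then $f_{k^*}=f$ at points with $X<0$ and $0<Y<\varepsilon'$, which contradicts strictness. The lower half-plane is symmetric. A contact on the $x$-axis is excluded directly by the local graphs $X=X_j(Y)$ on the side traversed just before the crossing, exactly as at $(c_1,0)$ in the proof of Theorem~\ref{thm:balanced}.

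Note also that the inner half of Step~4 needs $T_k\Gamma\subset\operatorname{int}\Gamma$ for all $0<k<1$. This does not follow from Step~2. It is the same contact argument applied with $\Gamma_1=\Gamma_2=\Gamma$, at the first $k$ where the inclusion would fail.
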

	
	For $0<k<1$, set
	\[
	\mathscr D_{k,m}(x,y)
	=k^{(1-m)/2}f\bigl(kx,k^{(m+1)/2}y\bigr)-f(x,y).
	\]
	For the three polynomial families considered
	in this paper, direct calculation gives
	\[
	\begin{aligned}
		\mathscr D_{k,1}(x,y)
		&=(k^2-1)(x^2+\beta y^2),
		&&\text{for system~\eqref{12}},\\
		\mathscr D_{k,3}(x,y)
		&=d(1-k)x^2+(1-k^3)y^2,
		&&\text{for system~\eqref{14}},\\
		\mathscr D_{k,1}(x,y)
		&=(k^2-1)(\alpha x^2-xy+\beta y^2),
		&&\text{for system~\eqref{17}}.
	\end{aligned}
	\]
	For every $0<k<1$, the first expression changes
	sign when $-1<\beta<-1/3$, and the second changes
	sign when $d<0$.
	For the third family, the quadratic form
	$\alpha x^2-xy+\beta y^2$ is indefinite when
	$0<\alpha\beta<1/4$, so the corresponding expression
	also changes sign.
	Thus condition~\eqref{111} fails,
	and Theorem~\ref{t11} does not apply directly
	in these parameter regions.
	
	{\bf Conflict of
		interest:} The authors have no conflicts of interest to declare that are relevant to the content of this article.
	
	{\bf Data Availability:}   No datasets were generated or analysed in this study.

	\section*{Acknowledgements}
	The work   is  supported by  Hunan Basic Science Research Center for Mathematical Analysis (2024JC2002). The first and the third authors are   supported by 
	the  National Natural Science Foundation of China (Nos. 12322109, 12571190). 	The second author is supported by the National Key R \& D Program of China (No. 2022YFA1005900), the Innovation Program of Shanghai Municipal Education Commission (No. 2021-01-07-00-02-E00087), and the National Natural Science Foundation of China (No. 12271353). The third author is supported by the Postdoctoral Fellowship Program and China Postdoctoral Science Foundation under Grant Number BX2026012.


\begin{thebibliography}{99}
		
		\bibitem{CGLZ}
		H.~Chen, K.~Gebreselassie, Y.~Lu, R.~Zhang,
		Global dynamics of a modified hybrid Van der Pol--Rayleigh oscillator,
		\textit{Acta Math. Sin. (Engl. Ser.)}, to appear.
		
		\bibitem{CTX}
		H.~Chen, Y.~Tang, D.~Xiao,
		Global dynamics of hybrid van der Pol--Rayleigh oscillators,
		\textit{Phys. D}, \textbf{428} (2021), 133021.
		
		\bibitem{CTX2023}
		H.~Chen, Y.~Tang, D.~Xiao,
		On the uniqueness of limit cycles in second-order oscillators,
		\textit{J. Differential Equations}, \textbf{370} (2023), 140--166.
		
		\bibitem{CX}
		H.~Chen, D.~Xiao,
		On the existence and uniqueness of limit cycles for hybrid oscillators,
		\textit{Ann. Mat. Pura Appl.}, \textbf{202} (2023), 2049--2071.
		
		\bibitem{Cherkas1978}
		L.~A.~Cherkas,
		Conditions for a center for the equation
		$yy'=\sum_{i=0}^{3}p_i(x)y^i$,
		\textit{Differ. Uravn.}, \textbf{14} (1978), 1594--1600 (in Russian);
		English translation: \textit{Differ. Equ.}, \textbf{14} (1978), 1133--1137.
		
		\bibitem{CLT}
		C.~Christopher, C.~Li, J.~Torregrosa,
		\textit{Limit Cycles of Differential Equations},
		2nd ed., Birkh\"auser, Cham, 2024.
		
		\bibitem{CGM}
		A.~Cima, A.~Gasull, F.~Ma\~nosas,
		Limit cycles for vector fields with homogeneous components,
		\textit{Appl. Math. (Warsaw)}, \textbf{24} (1997), 281--287.
		
		\bibitem{Dulac1923}
		H.~Dulac,
		Sur les cycles limites,
		\textit{Bull. Soc. Math. France}, \textbf{51} (1923), 45--188.
		
		\bibitem{Ecalle1992}
		J.~\'Ecalle,
		\textit{Introduction aux fonctions analysables et preuve constructive
			de la conjecture de Dulac},
		Hermann, Paris, 1992.
		
		\bibitem{ETA}
		S.~Erlicher, A.~Trovato, P.~Argoul,
		Modeling the lateral pedestrian force on a rigid floor
		by a self-sustained oscillator,
		\textit{Mech. Syst. Signal Process.}, \textbf{24} (2010), 1579--1604.
		
		\bibitem{FD}
		A.~C.~de Pina Filho, M.~S.~Dutra,
		Application of hybrid van der Pol--Rayleigh oscillators
		for modeling of a bipedal robot,
		\textit{Mech. Solid Braz.}, \textbf{1} (2009), 209--221.
		
		\bibitem{Gasull}
		A.~Gasull,
		Some open problems in low dimensional dynamical systems,
		\textit{SeMA J.}, \textbf{78} (2021), 233--269.
		
		\bibitem{GG}
		A.~Gasull, H.~Giacomini,
		Upper bounds for the number of limit cycles
		through linear differential equations,
		\textit{Pacific J. Math.}, \textbf{226} (2006), 277--296.
		
		\bibitem{Hale}
		J.~K.~Hale,
		\textit{Ordinary Differential Equations},
		2nd ed., Robert E. Krieger Publishing Co., Huntington, NY, 1980.
		
		\bibitem{Ilyashenko1990}
		Yu.~Ilyashenko,
		Finiteness theorems for limit cycles,
		\textit{Russian Math. Surveys}, \textbf{45} (1990), no.~2, 129--203;
		Russian original: \textit{Uspekhi Mat. Nauk},
		\textbf{45} (1990), no.~2, 143--200.
		
		\bibitem{Ilyashenko1991}
		Yu.~Ilyashenko,	  \textit{Finiteness Theorems for Limit Cycles}, Translations of Mathematical Monographs, vol. 94, American Mathematical Society, Providence, RI, 1991.
		
		\bibitem{kukles}
		I.~S.~Kukles,
		Sur quelques cas de distinction entre un foyer et un centre,
		\textit{C. R. (Doklady) Acad. Sci. URSS (N.S.)},
		\textbf{42} (1944), 208--211.
		
		\bibitem{LS}
		N.~Levinson, O.~K.~Smith,
		A general equation for relaxation oscillations,
		\textit{Duke Math. J.}, \textbf{9} (1942), 382--403.
		
		\bibitem{LC}
		L.-P.~Liou, K.-S.~Cheng,
		On the uniqueness of a limit cycle for a predator-prey system,
		\textit{SIAM J. Math. Anal.}, \textbf{19} (1988), 867--878.
		
		\bibitem{LZ}
		Y.~Lu, R.~Zhang,
		Global dynamics of a modified hybrid Van der Pol--Rayleigh oscillator II,
		\textit{Electron. J. Differential Equations},
		\textbf{2026} (2026), no.~25, 1--17.
		
		\bibitem{NB}
		A.~H.~Nayfeh, B.~Balachandran,
		\textit{Applied Nonlinear Dynamics:
			Analytical, Computational, and Experimental Methods},
		Wiley-VCH, Weinheim, 2004.
		
		\bibitem{OAI2026}
		OpenAI,
		Uniform bounds for planar polynomial limit cycles,
		preprint, 2026,
		\href{https://github.com/openai/math/blob/main/preprints/uniform-bounds-for-planar-polynomial-limit-cycles-September-24-2026/uniform-bounds-for-planar-polynomial-limit-cycles-September-24-2026.pdf}{available online}.
		
		\bibitem{Sansone1949}
		G.~Sansone,
		Sopra l'equazione di A.~Li\'enard delle oscillazioni di rilassamento,
		\textit{Ann. Mat. Pura Appl.}, \textbf{28} (1949), 153--181.
		
		\bibitem{WK}
		X.~Wang, R.~E.~Kooij,
		Limit cycles in a cubic system with a cusp,
		\textit{SIAM J. Math. Anal.}, \textbf{23} (1992), 1609--1622.
		
		\bibitem{XZ}
		D.~Xiao, Z.~Zhang,
		On the uniqueness and nonexistence of limit cycles
		for predator-prey systems,
		\textit{Nonlinearity}, \textbf{16} (2003), 1185--1201.
		
		\bibitem{XZ2008}
		D.~Xiao, Z.~Zhang,
		On the existence and uniqueness of limit cycles
		for generalized Li\'enard systems,
		\textit{J. Math. Anal. Appl.}, \textbf{343} (2008), 299--309.
		
		\bibitem{Yeung}
		M.~Yeung,
		Dulac's theorem revisited,
		\textit{Qual. Theory Dyn. Syst.}, \textbf{24} (2025), Paper No.~57.
		
		\bibitem{Zeng}
		X.~Zeng,
		On the uniqueness of limit cycle of Li\'enard's equation,
		\textit{Sci. Sinica Ser. A}, \textbf{25} (1982), no.~6, 583--592.
		
		\bibitem{ZZG}
		X.~Zeng, Z.~Zhang, S.~Gao,
		On the uniqueness of the limit cycle
		of the generalized Li\'enard equation,
		\textit{Bull. London Math. Soc.}, \textbf{26} (1994), 213--247.
		
		\bibitem{Zh}
		Z.~Zhang, T.~Ding, W.~Huang, Z.~Dong,
		\textit{Qualitative Theory of Differential Equations},
		Translations of Mathematical Monographs, vol.~101,
		American Mathematical Society, Providence, RI, 1992.
		
		\bibitem{ZL}
		Z.~Zhuang, C.~Liu,
		A complement to the uniqueness of the limit cycle
		of a family of systems with homogeneous components,
		\textit{Qual. Theory Dyn. Syst.}, \textbf{23} (2024), Paper No.~205.
		
	\end{thebibliography}
\end{document}